\newenvironment{spmatrix}{\left ( \begin{smallmatrix}} {\end{smallmatrix}\right )}
\newenvironment{sbmatrix}{\left [ \begin{smallmatrix}} {\end{smallmatrix}\right ]}
\newtheorem{theorem}{Theorem}[section]
\newtheorem*{theorem*}{Theorem}
\newtheorem{lemma}[theorem]{Lemma}
\newtheorem{proposition}[theorem]{Proposition}
\newtheorem{corollary}[theorem]{Corollary}
\theoremstyle{definition}
\newtheorem*{pf}{\it Proof}{\it}{\rm}
\newtheorem{definition}[theorem]{Definition}{\bf}{\rm}
\newtheorem*{acknowledgements}{Acknowledgements}
\newtheorem{rem}[theorem]{Remark}{\bf}{\rm}
\newtheorem{exm}[theorem]{Example}{\bf}{\rm}
\let\olditemize\itemize
\renewcommand{\itemize}{
\olditemize
\setlength{\itemsep}{1.2pt}
\setlength{\parskip}{3pt}
\setlength{\parsep}{0pt}
}
\let\oldenumerate\enumerate
\renewcommand{\enumerate}{
\oldenumerate
\setlength{\itemsep}{1.2pt}
\setlength{\parskip}{3pt}
\setlength{\parsep}{0pt}
}
\begin{document}
\title{General origamis and Veech groups of flat surfaces
}
  
  \author{Shun Kumagai\thanks{Research Center for Pure and Applied Mathematics, Graduate School of Information Sciences, Tohoku University, Sendai 980-8579, Japan. 
 This work was supported by JSPS KAKENHI Grant Number JP21J12260. 
\newline  email: shun.kumagai.p5@alumni.tohoku.ac.jp
}}{}
\date{}



  \maketitle



\begin{abstract}
In this century, a square-tiled translation surface (an origami) is intensively studied as an object with special properties of its translation structure and its $SL(2,\mathbb{R})$-orbit embedded in the moduli space. 
We generalize this concept in the language of flat surfaces appearing naturally in the Teichm\"uller theory. 
We study the combinatorial structure of origamis and show that a certain system of linear equations realizes the flat surface in which rectangles of specified moduli replace squares of an origami. 
This construction gives a parametrization of the family of flat surfaces with two finite Jenkins-Strebel directions for each combinatorial structure of two-directional cylinder decomposition. 
Moreover, we obtain the inclusion of Veech groups of such flat surfaces under a covering relation with specific branching behavior. 
\if0
Veech groups of flat surfaces have been studied using combinatorial structures invariant under affine homeomorphism. 
An abelian (also called oriented) origami is a typical example of flat surface with combinatorial structure. 
Its Veech group is described combinatorially and a vivid application to the number theory is known. 
In this paper, we study a generalization of origami as a flat surface. 
It is combinatorially determined in terms of dessin d'enfants. 
assigning compatible moduli lists for each origami, we obtain a family of flat surfaces with two finite Jenkins-Strebel directions. 
We observe an application to the Veech groups of coverings of such flat surfaces. 
Schmith\"usen proved in 2004 that the Veech group of an origami is closely related to a subgroup of the automorphism group of the free group $F_2$. 
This result is significant in the sense that the framework of approachable Veech groups is greatly extended. 
In this paper, we continue the analysis and consider what kind of settings of flat surfaces allow Veech groups to be characterized combinatorially like origamis.
We show that elements in the Veech group of a flat surface with two finite Jenkins-Strebel directions are characterized to allow a concurrence between two `origamis' defined by geodesics in the surface. 
In the proof we use an observation presented by Earle and Gardiner that a flat surface with two finite Jenkins-Strebel directions is decomposed into a finite number of parallelograms and is proved to be of finite analytic type. 
Using our results we can decide whether a matrix belongs to the Veech group for various kinds of flat surfaces of finite analytic type. 
\fi
\end{abstract}

\section{Introduction}
\label{intro}
A (finite) flat surface is an analytically finite Riemann surface $R$ together with an integrable holomorphic quadratic differential $\phi$ on $R$. 
Coordinates defined by local integral by $\sqrt{\phi}$ form an atlas whose any transition map is half-translation, which is defined up to finitely many conical singularities. 
On a flat surface, the Euclidian flat metric lifts, and we can define affine geometry objects such as locally-affine homeomorphisms. 
The Veech group $\Gamma(R,\phi)$ of a flat surface $(R,\phi)$, the group of derivatives of affine self homeomorphisms on $(R,\phi)$, is introduced by Veech \cite{V} in the context of the study of the geodesic flow on a flat surface. 

Earle and Gardiner \cite{EG} reformulated the theory of flat surfaces in terms of the Teichm\"uller spaces. 
The $SL(2,\mathbb{R})$-orbit of a flat surface $(R,\phi)$, the family of affine deformations of $(R,\phi)$ forms a holomorphic geometric disk embedded in the Teichm\"uller space. 
The maximal subgroup of the Teichm\"uller-modular group acting on the embedded disk is the group of affine self homeomorphisms on $(R,\phi)$, which acts on the disk by the M\"obius transformation. 
Thus the family of affine deformations of $(R,\phi)$ is embedded in the moduli space as an analytic orbifold $\mathbb{H}/\Gamma(R,\phi)$. 
On the other hand, the space of holomorphic quadratic differential on $R$ without poles is known to be a cotangent space of the Teichm\"uller space of $R$. 
It is naturally stratified into strata with specified orders of singularities. 
Each stratum is known to be a smooth orbifold of prescribed dimension \cite{BCGGM}.

Veech groups of flat surfaces are studied in terms of combinatorial objects invariant under affine homeomorphisms, such as \cite{S1}, \cite{B}, \cite{Sh}, \cite{ESS}. 
An abelian (formerly known as oriented) origami \cite{HS1} is a typical example of a flat surface with a combinatorial object. 
It is a finite cover of the unit square torus branched over one point, which comes down to combinatorial characterization such as the monodromy. 
More specially, it is often regarded as a translation surface whose any transition map is a translation. 
Schmith\"usen \cite{S1} showed that the universal Veech group of an abelian origami acts automorphically on the free group $F_2$ and its Veech group is a stabilizer under this action. 
Ellenberg, McReynolds \cite{EM} showed a sufficient condition for a group to be the Veech group of an abelian origami. 

In this paper, we study (general) origamis, which are flat surfaces obtained by gluing finite unit squares along edges. 
It is given by the construction that we reglue an abelian origami after inverting some squares. 
A stabilizer also describes the Veech group of origami under a combinatorial action of the universal Veech group. 
An origami also corresponds to a finite cover of the sphere with prescribed branching behavior. 
We show that a certain system of linear equations realizes the flat surface in which rectangles of specified moduli replace squares of an origami. 

Conversely, the flat surface constructed from an origami with compatible moduli list naturally corresponds to its two-directional cylinder decompositions, as the construction is shown by Earle and Gardiner \cite{EG}. 
For each combinatorial structure of two-directional cylinder decompositions, the family of such flat surfaces is parametrized in the quotient of the solution space of a system of linear equations by a finite group. 
Comparing decompositions into general origamis with compatible moduli lists of a flat surface gives an essential condition for the existence of affine self homeomorphism for each derivative. 
This observation leads to the inclusion of Veech groups of such flat surfaces under a covering relation with specific branching behavior. 

For an origami $(R,\phi)$, $\mathbb{H}/\Gamma(R,\phi)$ becomes an algebraic curve defined over a number field, called an origami curve. 
It is shown by M\"oller \cite{M1} that the action of the absolute Galois group $\mathrm{Gal}(\bar{\mathbb{Q}}/\mathbb{Q})$ respects the embedding of origami curves into the moduli spaces. 
He applied this result to another approach to observe the Galois action on the profinite mapping class group $\hat{\Gamma}_{2,0}$ in a profinite tower \cite{HSL} to be compatible with the embedding of origami curves. 
A special example \cite{HS2} of origami is pointed to have a significant  behavior in the moduli space. 

\medskip\medskip
\section{Flat surface}
\label{sec:2}

Let $R$ be a Riemann surface of finite analytic type $(g,n)$ with $3g-3+n>0$. 
\label{sec:2-1}

\begin{definition}
A \textit{holomorphic quadratic differential} (resp.\ a \textit{holomorphic abelian differential}) $\phi$ on $R$ is a tensor on $R$ whose restriction to each chart $(U,z)$ on $R$ is of the form $\phi(z)dz^2$ (resp.\ $\phi(z)dz$) where $\phi$ is a holomorphic function on $U$. \end{definition}
A pair $(R,\phi)$ of Riemann surface $R$ and a holomorphic quadratic differential $\phi$ on $R$ is called a \textit{flat surface}. 
Zeros and poles of $\phi$ are called \textit{singularities} of $(R,\phi)$ and we denote the set of {singularities} of $(R,\phi)$ by $\mathrm{Sing}(R,\phi)$. 
We say that a point $p\in R$ has \textit{order} $\mathrm{mult}_p(\phi)$ if $p\in\mathrm{Sing}(R,\phi)$ and otherwise $0$. 

Let $p_0\in R$ be a regular point of $\phi$ and $(U,z)$ be a chart around $p_0$. 
Then $\phi$ defines a natural coordinate ($\phi$\textit{-coordinate}) $\zeta_\phi(p)=\int^p_{p_0} \sqrt{\phi(z)}dz$ on $U$, on which $\phi=d\zeta_\phi^2$. 
These coordinates form an atlas on $R^*=R\setminus \mathrm{Sing}(R,\phi)$ whose any coordinate transformation is a half-translation $\zeta \mapsto \pm \zeta +c\ (c\in \mathbb{C})$. 
This atlas extends to a singularity of order $m$ with transition of the form $\zeta\mapsto \zeta^{\frac{m}{2}+1}$. 
We consider only integrable flat surfaces i.e.\ having singularities of order no less than $-1$.

\begin{definition}Let $(R,\phi)$, $(S,\psi)$ be flat surfaces of genus $g$. 
\begin{enumerate}
\item For a matrix $A=\begin{spmatrix}a&b\\ c&d\end{spmatrix}\in GL(2,\mathbb{R})$, we denote by $[A]=\begin{sbmatrix}a&b\\ c&d\end{sbmatrix}$ the quotient class of $A$ in $PSL(2,\mathbb{R})$. 
We define the affine map 
$T_A:x+iy\mapsto (ax+cy)+i(bx+dy)$ on the plane.  

\item We say that a branched covering $f:(S,\psi)\rightarrow (R,\phi)$ is \textit{locally-affine} if there exist finite subsets $\mathrm{Sing}(R,\phi)\subset \Sigma_R\subset R$, $\mathrm{Sing}(S,\psi)\subset \Sigma_S\subset S$ such that $f$ is restricted to a covering $f:S\setminus\Sigma_S\rightarrow R\setminus\Sigma_R$ that is locally represented by $z \mapsto T_A(z) + c$ for some $A\in SL(2,\mathbb{R})$ and $c\in \mathbb{R}^2)$ with respect to the natural coordinates. 
A locally-affine biholomorphism is called an \textit{isomorphism}. 
\item For a locally-affine covering $f:(S,\psi)\rightarrow (R,\phi)$, the local derivative $A$ is constant up to a factor $\{\pm \begin{spmatrix}1&0\\ 0&1\end{spmatrix}\}$ independent of coordinates of the flat structures. 
We call $D_f:=[A]\in PSL(2,\mathbb{R})$ the \textit{derivative} of $f$. 
\item Let $\mathrm{Aff}^+(R,\phi)
$ be the group of locally-affine self-homeomorphisms of $(R,\phi)$. 
We call the group $\Gamma (R,\phi)$ of derivatives of all elements in $\mathrm{Aff}^+(R,\phi)$ the 
\textit{Veech group}.
\end{enumerate}
\end{definition}

For a flat surface $(R,\phi)$ and $t\in\mathbb{H}$, we may consider a flat surface $(R,\phi_t)$ such that $\zeta_{\phi_t}=\mathrm{Re}(\zeta_\phi) + t\mathrm{Im}(\zeta_\phi)$. 
The Teichm\"uller theorem states that the natural map $\mathrm{id}_R:(R,\phi)\rightarrow(R,\phi_t)$ is the extremal deformation for the quasiconformal dilatation. 
Thus the map $\hat{\iota}_{(R,\phi)}:\mathbb{H}\rightarrow T(R):t\mapsto [(R,\phi_t), id_R]$ defines an isometric embedding into the Teichm\"uller space $T(R)$ with respect to the hyperbolic metric and the Teichm\"uller metric. 
The maximal subgroup of the Teichm\"uller-modular group acting on $\hat{\iota}_{(R,\phi)}(\mathbb{H})\subset T(R)$ is the affine group $\mathrm{Aff}^+(R,\phi)$, which acts on $\hat{\iota}_{(R,\phi)}(\mathbb{H})$ by the M\"obius transformation 
\begin{equation}
f\cdot\hat{\iota}_{(R,\phi)}(t)=\hat{\iota}_{(R,\phi)}(D_f(t)),\ \ f\in\mathrm{Aff}^+(R,\phi),\  t\in\mathbb{H}.
\end{equation}

Finally, the projected image of $\hat{\iota}_{(R,\phi)}(\mathbb{H})\subset T(R)$ in the moduli space $M(R)$ is understood to be the orbifold $\mathbb{H}/\Gamma(R,\phi)$. See \cite{EG}, \cite{IT} for details. 

\begin{rem}
We say that a flat surface $(R,\phi)$ is \textit{abelian} if $\phi$ becomes the square of an abelian differential on $R$ and otherwise \textit{non-abelian}. 
An abelian flat surface is often thought of as a \textit{translation surface} with an atlas whose any coordinate transformation is of the form $\zeta \mapsto \zeta +c\ (c\in \mathbb{C})$. 
As the derivative of a locally-affine homeomorphism between such flat surfaces is constant, it is defined as a matrix in $SL(2,\mathbb{R})$. 
The Veech group of such a flat surface is defined as a subgroup of $SL(2,\mathbb{R})$. 
\end{rem}	

For a flat surface $(R,\phi)\in \mathcal{Q}_g$, the analytic continuation of the two branches of $\sqrt{\phi}$ induces 
a flat surface $(\hat{R},\sqrt{\phi})$ on which $\sqrt{\phi}$ is a globally defined abelian differential. 
In abelian case, $(\hat{R},\sqrt{\phi})$ is the disjoint union of two copies of $(R,\phi)$. 
Otherwise, $(\hat{R},\sqrt{\phi})$ is the minimal abelian flat surface that covers $(R,\phi)$ called the \textit{canonical double cover}. 
See \cite[Construction 1]{Lanneau} for details. 

\medskip\medskip
\section{Origami}
\label{sec2-2}

\if0
Origamis are 
combinatorial objects which induce "square-tiled" flat structures, whose Veech groups can be characterized as a projected image of a subgroup of $\mathrm{Aut}(F_2)$. 
They are good examples in the sense that they always produce Teichm\"uller curves defined over $\bar{\mathbb{Q}}$. 

They are also studied in the context of the Galois action on combinatorial objects as well as \textit{dessins d'enfants}, a crucial result is given by M\"oller \cite{M1} and some of study is described in \cite{HS1}. 
\fi

\begin{definition}
An \textit{origami} of degree $d$ is a flat surface obtained from $d$ copies of the Euclidian unit squares by gluing along edges. 
\end{definition}

Figure \ref{origami_exm} shows an example of a non-abelian origami. 
In general, an origami of degree $d$ is a $2d$-fold cover of the sphere over four points whose {valency list} is of the form $(2^d\mid 2^d\mid 2^d\mid *\ )$. 
That is, any critical point over the three points has multiplicity two and so is nonsingular. 
The rest branched point pulls back the singularities of the origami. 
\begin{figure}[htbp]
\begin{center}
\includegraphics[width=120mm]{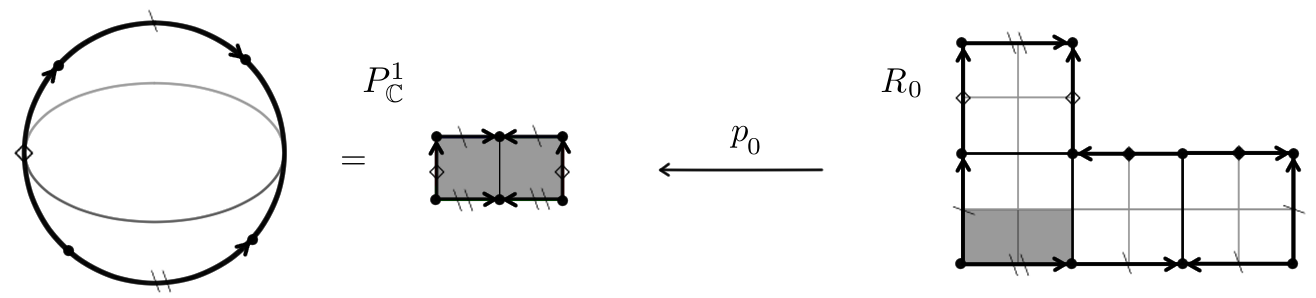}
  \caption{An origami of degree $4$: edges with the same character are glued so that the arrows match. 
It is an $8$-fold covering of the sphere $P^1_\mathbb{C}$ with valency list $(2^4\mid 2^4\mid 2^4\mid 1^2,3^2)$. }
\label{origami_exm}    
\end{center}
\end{figure}

An abelian origami of degree $d$ admits a $d$-fold cover of the unit square torus branched over just one point that pulls back  singularities of origami.
It leads to several combinatorial characterizations as follows.

\begin{lemma}[{\cite[Proposition 5.1]{HS1}}]\label{origami}
Let $S_d$ be the symmetry group order $d$. 
Then, an abelian origami of degree $d$ is up to equivalence uniquely determined by each of the following. 
\begin{enumerate}
\item{A $d$-fold cover $p:R\rightarrow E$ of the unit square torus branched at most over one point. }
\item {A connected, oriented graph $(\mathcal{V, E})$ with $|\mathcal{V}|=d$ such that each vertex has precisely one incoming edge and one outgoing edge labeled one with $x$ and one with $y$, respectively. }
\item{A pair of two permutations $x,y\in S_d$ generating a transitive group. }
\item{A subgroup $H$ of the free group $F_2$ of index $d$. }
\end{enumerate}
\end{lemma}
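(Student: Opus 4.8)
The plan is to prove Lemma~\ref{origami} as a cycle of bijections built on the classical dictionary of covering space theory, with the \emph{monodromy representation} serving as the central hub through which (a)--(d) all factor. The key geometric input is that the once-punctured torus carries a free fundamental group: writing $E^\ast = E \setminus \{b\}$ for the square torus with its single potential branch point $b$ removed, the two standard loops along the sides of the fundamental square furnish an isomorphism $\pi_1(E^\ast, x_0) \cong F_2 = \langle x, y\rangle$. Each of the four data will be read off a connected $d$-fold unbranched covering of $E^\ast$, so the first task is to justify passing between branched covers of $E$ and unbranched covers of $E^\ast$.

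First I would establish that datum (a) is equivalent to a connected unbranched $d$-fold cover of $E^\ast$. Given $p \colon R \to E$ branched at most over $b$, restriction over $E^\ast$ yields an honest covering $R^\ast \to E^\ast$. Conversely, any connected $d$-fold covering of $E^\ast$ extends uniquely to a branched cover of $E$: over a punctured-disk neighbourhood of $b$ each connected component of the preimage is equivalent to a power map $w \mapsto w^k$ of punctured disks, which fills in canonically by adjoining a single point. Connectedness of $R$ matches connectedness of $R^\ast$, so the equivalence classes correspond on the two sides.

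Next I would run the standard equivalences for connected unbranched covers of $E^\ast$. Labeling the fiber $p^{-1}(x_0)$ by $\{1,\dots,d\}$, the monodromy action of $\pi_1(E^\ast,x_0)\cong F_2$ on the fiber (by path lifting) gives a homomorphism $F_2 \to S_d$ whose image is transitive precisely when $R^\ast$ is connected; the images of the generators are a pair $x,y \in S_d$ generating a transitive subgroup, which is exactly (c). The stabilizer of the marked point $1$ is the index-$d$ subgroup $H = p_\ast \pi_1(R^\ast) \le F_2$ of (d), and the orbit--stabilizer identification of the transitive $F_2$-set with the coset space $F_2/H$ shows that (c) and (d) carry the same data. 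For (b), I read the permutations graphically: take $d$ vertices for the sheets and draw an $x$-labeled directed edge from $i$ to $x(i)$ and a $y$-labeled edge from $i$ to $y(i)$; since $x,y$ are permutations each vertex has a unique incoming and a unique outgoing edge of each label, and transitivity of $\langle x,y\rangle$ is exactly connectedness of the graph. Finally the origami is recovered by gluing $d$ unit squares with the right edge of square $i$ to the left edge of $x(i)$ and the top of $i$ to the bottom of $y(i)$, realizing $R^\ast$ as a square-tiled surface whose completion is $R$.

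The main obstacle is bookkeeping the \emph{equivalence relations} so that the four parametrizations agree after passing to isomorphism classes. Relabeling the fiber conjugates $(x,y)$ simultaneously in $S_d$ and conjugates $H$ in $F_2$, and changing $x_0$ or the reference sheet has the same effect. One must check that an isomorphism of origamis (a translation-preserving biholomorphism commuting with $p$) induces, and is induced by, precisely such a relabeling, so that unpointed origamis match conjugacy classes on the subgroup and permutation sides and isomorphism classes of the labeled graph on the graph side. Verifying this compatibility, together with the uniqueness of the branched extension in the first step, is the only genuinely delicate point; the remaining correspondences are the standard covering-space dictionary applied to $F_2 = \pi_1(E^\ast)$.
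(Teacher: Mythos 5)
Your argument is correct and is essentially the standard proof: the paper itself states this lemma without proof, citing \cite[Proposition 5.1]{HS1}, and the covering-space dictionary you describe (monodromy of the $d$-fold cover of the once-punctured torus, $\pi_1(E^\ast)\cong F_2$, transitive pairs in $S_d$, index-$d$ subgroups, and the labeled graph) is precisely the route taken in that reference. Your attention to the equivalence relations (simultaneous conjugation versus conjugacy of $H$ in $F_2$) is the right delicate point and is handled as you indicate.
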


Let $T=\begin{spmatrix}1&1\\0&1\end{spmatrix}$ and $S=\begin{spmatrix}0&1\\-1&0\end{spmatrix}$. 
Note that $T$ and $S$ generate $SL(2,\mathbb{Z})$.  

\begin{proposition}[{\cite[Lemma 2.8]{S1}}]\label{diagram} 
Suppose that $SL(2,\mathbb{Z})$ acts automorphically on $F_2$ by the formula $T\begin{spmatrix}x\\y\end{spmatrix}=\begin{spmatrix}x\\xy\end{spmatrix}$ and $S\begin{spmatrix}x\\y\end{spmatrix}=\begin{spmatrix}y^{-1}\\x\end{spmatrix}$.  
Then the Veech group of an abelian origami $H<F_2$ is the stabilizer of the conjugacy class of $H$ under this action of $SL(2,\mathbb{Z})$. 
\end{proposition}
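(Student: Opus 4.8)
The plan is to realize the abelian origami as the branched cover $p\colon R\to E$ of the unit square torus $E=\mathbb{C}/\mathbb{Z}^{2}$ from Lemma~\ref{origami}(a), branched only over the image $b$ of the lattice points, and to identify the Veech group with the stabilizer through covering-space theory. Write $E^{*}=E\setminus\{b\}$ and $R^{*}=R\setminus p^{-1}(b)$, so that $p\colon R^{*}\to E^{*}$ is an unbranched covering and, after a basepoint choice, $H=p_{*}\pi_{1}(R^{*})<\pi_{1}(E^{*})\cong F_{2}=\langle x,y\rangle$ is exactly the index-$d$ subgroup of Lemma~\ref{origami}(d), where $x,y$ are the horizontal and vertical simple loops. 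The Veech group of $E$ itself is $SL(2,\mathbb{Z})$: every $A\in SL(2,\mathbb{Z})$ preserves the lattice $\mathbb{Z}^{2}$, so $T_{A}$ descends to an affine homeomorphism $\psi_{A}\colon E\to E$ fixing $b$, hence restricts to $E^{*}$. First I would check that the induced outer action $A\mapsto(\psi_{A})_{*}$ on $\pi_{1}(E^{*})\cong F_{2}$ is precisely the action in the hypothesis: this is a direct computation of $(\psi_{T})_{*}$ and $(\psi_{S})_{*}$ on the generating loops, matching the formulas $T\begin{spmatrix}x\\y\end{spmatrix}=\begin{spmatrix}x\\xy\end{spmatrix}$ and $S\begin{spmatrix}x\\y\end{spmatrix}=\begin{spmatrix}y^{-1}\\x\end{spmatrix}$.

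The core is then a lifting criterion: I claim $A\in\Gamma(R,\phi)$ if and only if $\psi_{A}$ lifts through $p$ to an affine self-homeomorphism of $R$. For the ``if'' direction, lifting $\psi_{A}\circ p\colon R^{*}\to E^{*}$ along the covering $p\colon R^{*}\to E^{*}$ is possible precisely when $(\psi_{A}\circ p)_{*}\pi_{1}(R^{*})=(\psi_{A})_{*}(H)$ is contained in a conjugate of $H$; since $(\psi_{A})_{*}$ is bijective, the two subgroups share the finite index $d$, so containment upgrades to equality, i.e.\ $(\psi_{A})_{*}[H]=[H]$ for the conjugacy class $[H]$. The resulting lift $f$ is locally $T_{A}$ in the natural coordinates, hence affine with derivative $[A]$, and extends over the punctures $p^{-1}(b)$ because $\psi_{A}$ fixes $b$; thus $A\in\Gamma(R,\phi)$. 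For the converse, any $f\in\mathrm{Aff}^{+}(R,\phi)$ with $D_{f}=[A]$ has $A\in SL(2,\mathbb{Z})$, because the flat structure on $R$ is pulled back from $E$ and $f$ must preserve the integral period lattice inherited from the unit-square tiling (Gutkin--Judge); its planar expression $z\mapsto T_{A}(z)+c$ with $A\in SL(2,\mathbb{Z})$ descends to $\psi_{A}$ on $E$, so $p\circ f=\psi_{A}\circ p$ and $f$ is a lift of $\psi_{A}$. The lifting criterion then forces $(\psi_{A})_{*}[H]=[H]$.

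Combining the two directions gives $\Gamma(R,\phi)=\{A\in SL(2,\mathbb{Z}):(\psi_{A})_{*}[H]=[H]\}$, which by the first paragraph is exactly the stabilizer of the conjugacy class of $H$ under the stated action. The main obstacle I anticipate is the careful bookkeeping of basepoints and orientation conventions: the induced map $(\psi_{A})_{*}$ on $\pi_{1}(E^{*})$ is only canonical up to inner automorphism, and one must also pin down whether $A\mapsto(\psi_{A})_{*}$ is a left or right action (the transpose in the definition of $T_{A}$ makes $A\mapsto T_{A}$ an anti-homomorphism, so the generator formulas must be matched accordingly). It is precisely this inner ambiguity, together with the freedom in the translation constant $c$, that makes the conjugacy class of $H$ rather than $H$ itself the correct invariant, and one must verify that the lifting criterion and the extension of $f$ across the branch fiber are insensitive to all these choices.
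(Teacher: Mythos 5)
The paper gives no proof of this proposition---it is quoted verbatim from Schmith\"usen \cite[Lemma 2.8]{S1}---so there is no internal argument to compare against; your proposal is the standard proof underlying that reference: identify the affine group of the once-punctured unit torus with $SL(2,\mathbb{Z})$ acting on $\pi_1(E^*)\cong F_2$ up to inner automorphisms, and combine the covering-space lifting criterion with the finite-index upgrade from containment $(\psi_A)_*(H)\subseteq H'$ to equality. The two substantive inputs --- that the derivative of any affine self-map of the origami lies in $SL(2,\mathbb{Z})$ (Gutkin--Judge, using the marked preimages of the branch point), and that such a map genuinely descends through $p$ to $\psi_A$ on $E$ (which rests on the transition constants of the unit-square atlas lying in $\mathbb{Z}^2$, so that $z\mapsto T_A(z)+c$ is well defined modulo $\mathbb{Z}^2$ and fixes $b$) --- are both correctly identified, and the convention issues you flag (left versus right action, inner ambiguity forcing the conjugacy class rather than $H$ itself) are exactly the right ones, so the proposal is sound.
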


 Let $I_d=\{1,\ldots d\}$ be the set of $d$ indices and $\bar{I}_d=\{\pm1,\ldots \pm d\}$ be its double. 
Let $\bar{S}_d$ be the group of odd functions in $\mathrm{Sym}\bar{I}_d$ that naturally embeds $S_d$ as the group of sign-preserving functions. 
For each $x\in S_d$ and $\varepsilon\in\{\pm1\}^d$, let $x^\varepsilon$ denote the map defined by
\begin{equation}
x^\varepsilon(\pm\lambda) = \left\{
\begin{array}{ll}
\pm x(\lambda)&\text{ if }\pm\varepsilon(\lambda)=+1\\
\pm x^{-1}(\lambda)&\text{ if }\pm\varepsilon(\lambda)=-1 
\end{array}
\right. 
\ \text{for each }\lambda\in {I}_d.
\end{equation}
\begin{lemma}[{\cite[Section 4]{K}}]\label{cutpaste}
An origami of degree $d$ is given by an abelian origami $( x,y)$ of degree $d$ with $\varepsilon\in\{\pm1 \}^d$ as a flat surface whose canonical double cover is the abelian origami $(\mathbf{x,y})=(x^\mathrm{sign}, \varepsilon y^\varepsilon\varepsilon(y^\varepsilon))$. 
\end{lemma}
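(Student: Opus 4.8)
The plan is to make the half-translation structure of a general origami completely explicit from the data $(x,y,\varepsilon)$, to realize its canonical double cover as an honest square-tiled translation surface on the doubled index set $\bar I_d$, and then to identify the two monodromy permutations of that cover with the maps $x^{\mathrm{sign}}$ and $\varepsilon\,y^\varepsilon\,\varepsilon(y^\varepsilon)$ by tracking, edge by edge, when a crossing changes sheets.

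First I would fix for each of the $d$ squares a natural $\phi$-coordinate in which it is the unit square. Because these coordinates are defined only up to $\zeta\mapsto\pm\zeta+c$, the identification of two edge-adjacent squares is recorded by whether the comparison of their coordinates is a translation or a point reflection $\zeta\mapsto-\zeta+c$. Reading off which square lies to the right, respectively above, of which yields permutations $x,y\in S_d$, exactly the data of Lemma \ref{origami}(c), and the orientation-reversal pattern is then normalized by choosing, for each square, the branch $\varepsilon(\lambda)\sqrt\phi$ to be used on it, i.e.\ a sign vector $\varepsilon\in\{\pm1\}^d$. This exhibits $(R,\phi)$ as the abelian origami $(x,y)$ reglued after inverting the squares with $\varepsilon(\lambda)=-1$, which is the first assertion; the remaining freedom in this choice is the $\{\pm1\}^d$-action the introduction refers to as the finite group acting on the parameter space, so I would expect existence but not uniqueness of $(x,y,\varepsilon)$.

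Next I would construct the canonical double cover following \cite[Construction 1]{Lanneau}. Since $\sqrt\phi$ has monodromy in $\{\pm1\}$, the cover $(\hat R,\sqrt\phi)$ on which it becomes single valued is modeled by two sheets $\{+1,\dots,+d\}$ and $\{-1,\dots,-d\}$ over the squares, the sheet $\pm\lambda$ recording a branch of $\sqrt\phi$ on square $\lambda$; this is precisely the index set $\bar I_d$. Every edge identification of $\hat R$ is a translation by construction, so $(\hat R,\sqrt\phi)$ is a square-tiled translation surface, the map $\pm\lambda\mapsto\lambda$ is a degree-two locally-affine covering of $(R,\phi)$ with derivative the identity class, and the involution $\lambda\mapsto-\lambda$ negates $\sqrt\phi$ and has quotient $(R,\phi)$. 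Connectedness of $\hat R$, hence its minimality as the canonical double cover, is equivalent to nontriviality of the monodromy, that is to $(R,\phi)$ being non-abelian; this reproduces the dichotomy recorded after the Remark in Section \ref{sec:2}, the abelian case splitting $\hat R$ into the two copies described there. Correspondingly I would expect the subgroup of $\mathrm{Sym}\,\bar I_d$ generated by $\mathbf{x},\mathbf{y}$ to be transitive exactly in the non-abelian case, so that $(\mathbf{x},\mathbf{y})$ is then an abelian origami in the sense of Lemma \ref{origami}(c).

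The heart of the argument is the computation of the two gluing permutations on $\bar I_d$. I would fix the sheet labeling adapted to the horizontal direction, i.e.\ so that the branch of $\sqrt\phi$ used on each square is the one compatible with the translations along the horizontal edges; with this normalization a horizontal crossing never changes sheets, and one reads off immediately that $\mathbf{x}$ acts as $x$ on the positive sheet and as $x^{-1}$ on the negative sheet, which is the sheet-preserving map $x^{\mathrm{sign}}$ of the displayed definition. The cost of this choice is borne by the vertical direction: the point reflection that inverts a square interchanges its top and bottom edges, so a vertical crossing through a square with $\varepsilon(\lambda)=-1$ must change sheets, and, viewing $\varepsilon$ as the sign-flip involution $\pm\lambda\mapsto\pm\varepsilon(\lambda)\lambda$, these switches enter on both the source and the target square, producing the composite $\varepsilon\,y^\varepsilon\,\varepsilon(y^\varepsilon)$. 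I expect this vertical computation to be the main obstacle: one must orient every edge consistently, check that the resulting $\mathbf{y}$ is a genuine element of $\mathrm{Sym}\,\bar I_d$ rather than merely a composite of non-bijective pieces, confirm its compatibility with the involution $\lambda\mapsto-\lambda$ (which should conjugate both generators to their inverses, matching $\sqrt\phi\mapsto-\sqrt\phi$), and verify the agreement around each vertex so that the branching over the single non-simple critical value is reproduced, i.e.\ that the cover carries the valency list $(2^d\mid 2^d\mid 2^d\mid *)$ recorded in Section \ref{sec2-2}. Once the edge rule is pinned down, matching it against the definition of $x^\varepsilon$ is a direct verification.
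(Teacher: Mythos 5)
First, note that the paper does not actually prove Lemma \ref{cutpaste}: it is imported verbatim from \cite{K} and is only illustrated here by the construction in Figure \ref{construction}, so the only available comparison is with that intended construction. Your strategy --- normalize the $\phi$-coordinates so that all horizontal identifications become translations, record the residual vertical flips in a sign vector $\varepsilon$, build the canonical double cover on $\bar{I}_d$ as the two branches of $\sqrt{\phi}$ over each square in the sense of \cite{Lanneau}, and read off the two gluing permutations by tracking sheet changes --- is exactly the construction the lemma encodes, and the caveat you raise for the abelian case (disconnected double cover, hence $(\mathbf{x},\mathbf{y})$ non-transitive) is a correct reading of the statement.

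However, as written the proposal is a plan rather than a proof, and the two steps you defer are precisely the ones that carry the content of the lemma. First, the existence of the horizontal normalization is not free: to choose branches of $\sqrt{\phi}$ so that no horizontal crossing changes sheets, each horizontal gluing cycle must contain an even number of orientation-reversing identifications. This follows from the fact that the core curve of a maximal $\phi$-cylinder has trivial linear holonomy, but that observation appears nowhere in your argument; without it, the first assertion (that every origami arises from some $(x,y,\varepsilon)$ with $x,y\in S_d$, rather than only from order-two involutions of $\bar{I}_d$ as in Theorem \ref{gen_origami}(d)) is unsupported. Second, the identity $\mathbf{y}=\varepsilon\,y^{\varepsilon}\,\varepsilon(y^{\varepsilon})$ is asserted (``producing the composite'') but never derived: you do not write down the edge rule for a vertical crossing out of square $\lambda$ as a function of $\varepsilon(\lambda)$ and $\varepsilon(y^{\pm1}(\lambda))$ and match it term by term against the displayed definition of $x^{\varepsilon}$, and you yourself list bijectivity of $\mathbf{y}$, equivariance under $\kappa\mapsto-\kappa$, and the vertex/valency check as open. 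Since the exact form of $(\mathbf{x},\mathbf{y})$ is what the rest of the paper relies on (e.g.\ in Lemma \ref{isom} and Theorem \ref{gen_origami}), the argument cannot be considered complete until that computation is actually carried out.
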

The definition of $(\mathbf{x,y})$ corresponds to the construction of an origami shown in Figure \ref{construction}. 
\begin{figure}[htbp]
\begin{center}
\includegraphics[width=110mm]{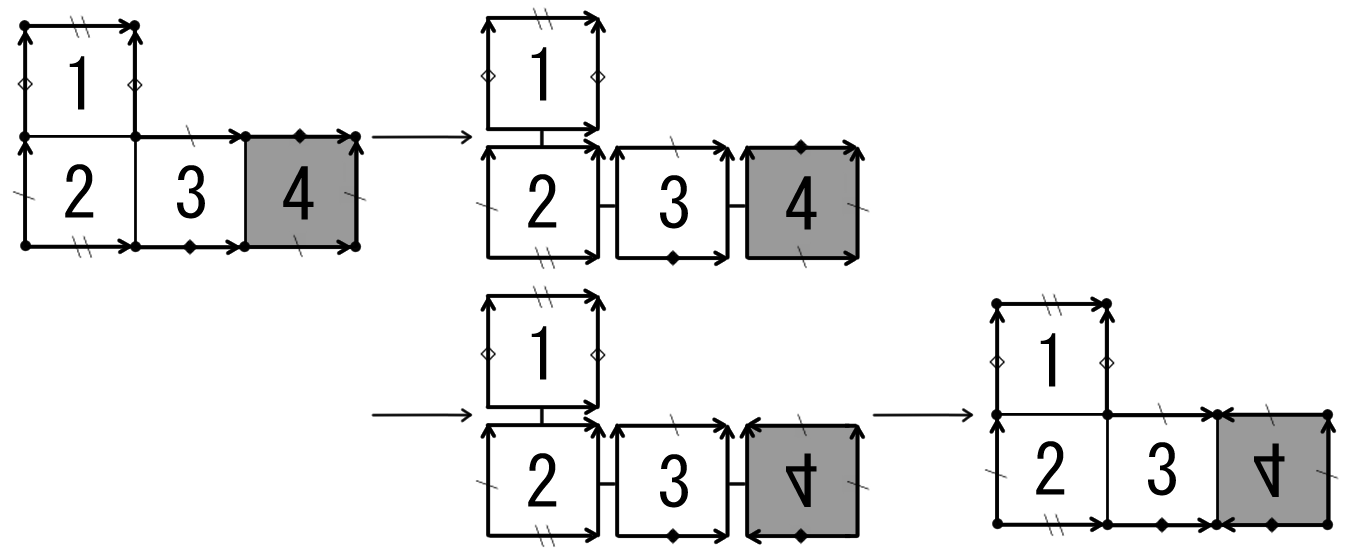}
  \caption{The construction of the origami in Figure \ref{origami_exm}. It is given by $(x,y, \varepsilon)$ where $x=(1)(2\ 3\ 4)$, $y=(1\ 2)(3\ 4)$, $\varepsilon=(+,+,+,-)$. We obtain the origami by regluing the abelian origami $(x,y)$ after inverting squares of negative sign. }
\label{construction}    
\end{center}
\end{figure}

\begin{lemma}[{\cite[Section 4]{K}}]\label{isom}Let $\mathcal{O}_j$ be two origamis of degree $d$ given by $(x_j,y_j,\varepsilon_j)$ 
$(j=1,2)$. Then $\mathcal{O}_1,\mathcal{O}_2$
are isomorphic as flat surfaces if and only if there exists $\delta\in\{\pm1 \}^d$ and $\sigma \in S_d$ 
 such that following hold on $\{1,\ldots ,d\}$.
\begin{enumerate}\setlength{\leftskip}{5pt}\setlength{\itemsep}{3pt}
\item $\delta=\delta\circ x_1$. 
\item $x_2={\sigma}^\# (x_1^{\delta})$. 
\item $\varepsilon\cdot\varepsilon(y_2^\varepsilon)=1$ where $\varepsilon=\delta\circ{\sigma}^{-1}\cdot \varepsilon_1\circ{\sigma}^{-1}\cdot\varepsilon_2$. 
\item $y_2={\sigma}^\#(y_1^{\delta\cdot\varepsilon_1\cdot\varepsilon_2\circ{\sigma}})$. 
\end{enumerate}
In particular, $\theta(x,y,\varepsilon):=(x^\mathrm{sign}, \varepsilon y^\varepsilon\varepsilon(y^\varepsilon))$ defines a 1-1 correspondence between $\Omega_d:=\{(x,y,\varepsilon)\mid x,y\in S_d,\ \varepsilon \in \{\pm1 \}^d \}$ and the set of origamis up to equivalence. 
\end{lemma}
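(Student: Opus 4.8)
The plan is to reduce the classification of origamis up to flat-surface isomorphism to the known classification of abelian origamis (Lemma~\ref{origami}) by passing to canonical double covers, and then to unwind the resulting conjugacy condition into the explicit relations (a)--(d).

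First I would note that an isomorphism $f\colon\mathcal{O}_1\to\mathcal{O}_2$ of flat surfaces is a biholomorphism pulling back the quadratic differential, hence locally a half-translation $\zeta\mapsto\pm\zeta+c$. Such an $f$ lifts to an isomorphism $\hat f\colon\hat{\mathcal O}_1\to\hat{\mathcal O}_2$ of canonical double covers, on which $\sqrt{\phi}$ is single-valued, and this lift commutes with the two covering involutions $\tau_j\colon n\mapsto -n$ (uniquely up to replacing $\hat f$ by $\tau_2\hat f$); conversely, any isomorphism of the double covers commuting with the $\tau_j$ descends to an isomorphism of origamis. By Lemma~\ref{cutpaste} the double covers are the abelian origamis $\theta(x_j,y_j,\varepsilon_j)=(\mathbf{x}_j,\mathbf{y}_j)$, and by the monodromy description of Lemma~\ref{origami} an isomorphism of abelian origamis is a simultaneous conjugation $\mathbf{x}_2=\pi\mathbf{x}_1\pi^{-1}$, $\mathbf{y}_2=\pi\mathbf{y}_1\pi^{-1}$ by a relabeling $\pi\in\mathrm{Sym}(\bar I_d)$. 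Commuting with the $\tau_j$ is exactly the condition that $\pi$ be odd, i.e. $\pi\in\bar S_d$.

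Next I would write such a $\pi\in\bar S_d$ in terms of the data it induces on the quotient $\bar I_d/\{\pm1\}=I_d$: a permutation $\sigma\in S_d$ relabeling the $d$ squares and a sign vector $\delta\in\{\pm1\}^d$ recording whether $f$ acts on each square by a translation ($\delta=+1$) or by $\zeta\mapsto-\zeta+c$ ($\delta=-1$), so that $\pi(\lambda)=\delta(\lambda)\,\sigma(\lambda)$; here the overall ambiguity $\delta\mapsto-\delta$ corresponds to the freedom $\hat f\mapsto\tau_2\hat f$ and to the identification $[I]=[-I]$ in $PSL(2,\mathbb R)$. Substituting this $\pi$, together with $\mathbf{x}_j=x_j^{\mathrm{sign}}$ and $\mathbf{y}_j=\varepsilon_j\,y_j^{\varepsilon_j}\,\varepsilon_j(y_j^{\varepsilon_j})$, into the two conjugation equations is the computational core. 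The horizontal equation should split into the requirement that $\delta$ be constant along the $x_1$-cylinders, namely (a) $\delta=\delta\circ x_1$ (no inversion occurs across a horizontal edge), and the relabeled identity (b) $x_2=\sigma^\#(x_1^\delta)$. The vertical equation should likewise yield (d) $y_2=\sigma^\#(y_1^{\delta\cdot\varepsilon_1\cdot\varepsilon_2\circ\sigma})$, whose twisted exponent records that crossing a vertical edge at an inverted square flips the local sign, together with the sign-consistency relation (c) $\varepsilon\cdot\varepsilon(y_2^\varepsilon)=1$ guaranteeing that the vertical relabeling is globally well-defined.

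I expect the main obstacle to be precisely this unwinding of the vertical equation into (c) and (d). In the horizontal direction the cover monodromy $x_j^{\mathrm{sign}}$ carries no inversion data, so the bookkeeping is clean; but $\mathbf{y}_j$ is a threefold product involving $\varepsilon_j$, and conjugating it by the sign-twisted $\pi$ forces one to track how the inversion pattern of $\mathcal{O}_1$ is transported onto that of $\mathcal{O}_2$. Condition (c) is the cocycle-type identity making this transport consistent, and has no horizontal analogue; checking that it is symmetric under $1\leftrightarrow 2$, as required for an equivalence relation, is the delicate point. Finally, the ``in particular'' statement follows by combining the surjectivity of $\theta$ from Lemma~\ref{cutpaste} with the equivalence just established: every origami is some $\theta(x,y,\varepsilon)$, and two triples give isomorphic origamis exactly when they are related by an admissible pair $(\sigma,\delta)$, so $\theta$ induces the asserted correspondence on equivalence classes.
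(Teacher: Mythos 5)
The paper itself offers no proof of Lemma~\ref{isom}: it is imported verbatim from \cite[Section 4]{K}, so there is no in-text argument to compare yours against. That said, your overall strategy --- lift an isomorphism of flat surfaces to a translation isomorphism of the canonical double covers, note that the lift commutes with the deck involutions and is unique up to composing with one of them, invoke the monodromy description of abelian origamis to convert this into simultaneous conjugation by an odd permutation $\pi\in\bar S_d$, and then decompose $\pi(\lambda)=\delta(\lambda)\,\sigma(\lambda)$ --- is sound and is surely the intended route, since it is exactly how the surrounding text (Lemma~\ref{cutpaste} and the proof of Theorem~\ref{gen_origami}) passes between origamis and their double covers. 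Two points you should still make explicit: when $\mathcal{O}_j$ is abelian the ``double cover'' is disconnected, so the uniqueness of the lift and the identification of involution-equivariant conjugators with $\bar S_d$ need a separate (easy) remark; and since an isomorphism is defined here as a locally-affine biholomorphism, you should justify that its derivative is $[\pm I]$ before asserting it is locally $\zeta\mapsto\pm\zeta+c$.

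The genuine gap is that the computational core is announced rather than executed, and that computation is the entire content of the lemma. The conjugated pair $(\pi\mathbf{x}_1\pi^{-1},\pi\mathbf{y}_1\pi^{-1})$ is not automatically of the normal form $(x_2^{\mathrm{sign}},\ \varepsilon_2\,y_2^{\varepsilon_2}\,\varepsilon_2(y_2^{\varepsilon_2}))$; conditions (a) and (c) are precisely the obstructions to renormalizing it into that shape, and (b), (d) are what $x_2,y_2$ must then be. You correctly predict that the vertical bookkeeping is the hard part, but you give no derivation that $\pi\mathbf{y}_1\pi^{-1}$ factors as the required three-term product, no computation producing the particular combination $\varepsilon=\delta\circ\sigma^{-1}\cdot\varepsilon_1\circ\sigma^{-1}\cdot\varepsilon_2$ in (c), and no argument for why (a) takes the form $\delta=\delta\circ x_1$ beyond a heuristic gloss. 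Until you evaluate $\pi\mathbf{y}_1\pi^{-1}$ on $+\lambda$ and $-\lambda$ explicitly and match it against the normal form (and check the resulting relations are symmetric in $1\leftrightarrow2$, as you note), what you have is a correct and well-motivated outline, not a proof of the stated identities. The ``in particular'' clause is fine modulo the same caveat, since it is just the main equivalence read as a statement about fibers of $\theta$.
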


\begin{theorem}\label{gen_origami}An origami of degree $d$ is up to equivalence uniquely determined by each of the following. 
\begin{enumerate}
\item{A $2d$-fold cover $p:R\rightarrow P^1_\mathbb{C}$ with valency list $(2^d\mid 2^d\mid 2^d\mid *\ )$. }
\item{A pair of abelian origami of degree $d$ and a $d$-tuple of signs.}
\item{A connected tripartite graph $(\mathcal{V}=\mathcal{V}_{ c}\sqcup\mathcal{V}_{ h}\sqcup\mathcal{V}_{ v}, \mathcal{E})$ with $|\mathcal{V}_{ c}|=|\mathcal{V}_{h}|=|\mathcal{V}_{v}|=d$ such that each edge connects vertices in $\mathcal{V}_{ c}$ and either $\mathcal{V}_{ h}$ or $\mathcal{V}_{ v}$, and each vertex in $\mathcal{V}_{ c},\mathcal{V}_{ h}, \mathcal{V}_{ v}$ has valency $4,2,2$ respectively. }
\item{A pair of permutations $\mu,\nu\in \mathrm{Sym}\{\pm1$, \ldots, $\pm d\}$ that are fixed-point-free, of order $2$, and together with sign inversion generate a transitive group.}
\end{enumerate}
\end{theorem}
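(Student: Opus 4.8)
The plan is to take (b) as the point of departure---it is essentially a restatement of the preceding lemmas---and to bolt (a), (c), (d) onto it through the monodromy of the associated branched cover. For (b), Lemma \ref{origami} identifies an abelian origami of degree $d$ with a transitive pair $(x,y)\in S_d\times S_d$, so a pair consisting of an abelian origami and a $d$-tuple of signs is precisely a triple $(x,y,\varepsilon)\in\Omega_d$. Lemma \ref{cutpaste} turns such a triple into an origami whose canonical double cover is the abelian origami $(\mathbf{x},\mathbf{y})=(x^{\mathrm{sign}},\varepsilon y^\varepsilon\varepsilon(y^\varepsilon))$, and Lemma \ref{isom} shows that $\theta$ induces a bijection between $\Omega_d$, modulo its explicit $(\delta,\sigma)$-equivalence, and origamis up to equivalence. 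Thus origamis up to equivalence are already identified with the datum (b), and it remains to match (a), (c), (d) against this.

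For (a)$\Leftrightarrow$(d) I would invoke the Riemann existence theorem. A degree-$2d$ cover $p\colon R\to P^1_\mathbb{C}$ branched over four marked points is classified, up to isomorphism of covers, by a $4$-tuple $(g_1,g_2,g_3,g_4)$ of permutations of the fibre with $g_1g_2g_3g_4=\mathrm{id}$ generating a transitive subgroup of $\mathrm{Sym}(\bar{I}_d)$, taken up to simultaneous conjugation; connectedness of $R$ is transitivity. The valency list $(2^d\mid 2^d\mid 2^d\mid *\ )$ forces $g_1,g_2,g_3$ to be fixed-point-free involutions, while $g_4=(g_1g_2g_3)^{-1}$ carries the singularities and has cycle type $*$. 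Since any two fixed-point-free involutions of a $2d$-element set are conjugate, I would spend the conjugation freedom to normalise $g_3$ to the sign inversion $\iota\colon\lambda\mapsto-\lambda$; the residual freedom is then conjugation by the centraliser of $\iota$, which is exactly the signed permutation group $\bar{S}_d$. Setting $\mu=g_1$ and $\nu=g_2$ yields the pair of (d), and conversely $(\mu,\nu,\iota)$ rebuilds the cover, giving the bijection (a)$\Leftrightarrow$(d).

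The step (c)$\Leftrightarrow$(d) is a reformulation of the three fixed-point-free involutions as three perfect matchings on $\bar{I}_d$. I would take $\mathcal{V}_c=\bar{I}_d/\langle\iota\rangle$, $\mathcal{V}_h=\bar{I}_d/\langle\mu\rangle$, $\mathcal{V}_v=\bar{I}_d/\langle\nu\rangle$, each of size $d$, and join each $\lambda\in\bar{I}_d$ to its $\mu$-class and to its $\nu$-class; a short count gives the valencies $4,2,2$, and transitivity of $\langle\mu,\nu,\iota\rangle$ is connectedness of the graph. The one point needing care is the reverse passage: recovering $\mu,\nu$ from the abstract graph requires pairing, at each $4$-valent vertex, its two $\mathcal{V}_h$-edges with its two $\mathcal{V}_v$-edges, i.e.\ the cyclic order in which $h$- and $v$-edges alternate around a square. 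I would record this alternation as part of the data carried by (c) so that the graph and the involution triple encode the same information.

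The main obstacle is not any single bijection but the bookkeeping that makes all four descriptions land on one equivalence relation. Concretely, I must match the up-to-$\bar{S}_d$-conjugation equivalence coming from (a)/(d) with the explicit $(\delta,\sigma)$-equivalence of Lemma \ref{isom}; this is plausible precisely because $\bar{S}_d\cong\{\pm1\}^d\rtimes S_d$, so a normalising pair $(\delta,\sigma)$ with $\delta\in\{\pm1\}^d$ and $\sigma\in S_d$ is the same thing as an element of the centraliser of $\iota$. The genuinely delicate part is the dictionary between the corner-involution datum $(\mu,\nu,\iota)$ and the canonical-double-cover formula $(\mathbf{x},\mathbf{y})=(x^{\mathrm{sign}},\varepsilon y^\varepsilon\varepsilon(y^\varepsilon))$: the translation monodromies $\mathbf{x},\mathbf{y}$ of the double cover should appear as products of consecutive corner-involutions realising the index-two translation subgroup of the $(2,2,2,2)$-orbifold group of the pillowcase, and checking that the resulting signs reproduce the twisted exponent $y^\varepsilon$ and the factor $\varepsilon(y^\varepsilon)$ is where the sign combinatorics must be carried out carefully. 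Once this dictionary is verified, (a), (b), (c), (d) are four descriptions of the same origami up to equivalence, which proves the theorem.
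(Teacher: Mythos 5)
Your proposal is correct and rests on the same ingredients as the paper (the pillowcase cover, the monodromy classification of branched covers, and Lemmas \ref{origami}, \ref{cutpaste}, \ref{isom} for the sign data), but it is organized along a different graph of equivalences. The paper goes origami $\Leftrightarrow$ (a) by lifting the flat structure of the pillowcase sphere, origami $\Leftrightarrow$ (b) via Lemma \ref{isom} together with the factorization of a cover (a) through an abelian origami followed by the elliptic involution quotient (Lemma \ref{cutpaste}), and origami $\Leftrightarrow$ (c) $\Leftrightarrow$ (d) by passing to the dessin d'enfant $\beta^{-1}([0,1])$ of the composed map $\beta=\beta_0\circ p$ with $\beta_0(x,y)=4x^2$ and then citing \cite[Proposition 3.2]{HS1}. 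You instead prove (a) $\Leftrightarrow$ (d) directly by the Riemann existence theorem, normalizing the third fixed-point-free involution to the sign inversion and identifying the residual conjugation freedom with $\bar{S}_d$; this has the merit of making explicit the equivalence relation on (d) that the paper only records after the proof (conjugacy in $\bar{S}_d$). Your concern about recovering $\mu,\nu$ from the abstract graph in (c) is well founded, and your fix (recording how the $h$- and $v$-edges alternate around each $4$-valent vertex) is exactly what the paper supplies implicitly by taking (c) to be an \emph{embedded} graph, namely the dessin; a purely abstract tripartite graph with the stated valencies would not suffice. Finally, the sign dictionary you defer --- matching $(\mu,\nu,\iota)$ against $(\mathbf{x},\mathbf{y})=(x^{\mathrm{sign}},\varepsilon y^\varepsilon\varepsilon(y^\varepsilon))$ --- is not an additional gap relative to the paper: it is precisely the content of the cited Lemma \ref{cutpaste}, which the paper likewise invokes without reproving.
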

\begin{pf}(origami $\Leftrightarrow$ {\rm (a)} $\Leftrightarrow$ {\rm (b)}) A cover {\rm (a)} uniquely lifts the flat structure on the \textit{pillowcase sphere} obtained by gluing two half-squares back to back. 
The equivalence between origamis and {\rm (b)} follows from Lemma \ref{isom}. 
The construction referring to Lemma \ref{cutpaste} shows that a cover {\rm (a)} is obtained from an origami by inverting some monodromies of a cover {\rm (a)} which is the composition of an abelian origami $p:R'\rightarrow E$ and the elliptic involution quotient $q:E \rightarrow P^1_\mathbb{C}$. 

\noindent
(origami $\Leftrightarrow$ {\rm (c)} $\Leftrightarrow$ {\rm (d)}) A graph {\rm (c)} defines an origami by assigning a unit square cell to each vertex in $\mathcal{V}_c$, a horizontal edge to each vertex in $\mathcal{V}_h$, a vertical edge to each vertex in $\mathcal{V}_h$, and the adjacency between a cell and an edge to each edge in $\mathcal{E}$. 
Conversely, a cover {\rm(a)} and $(C_0=\{y=4x^3-x\}, \beta_0(x,y)=4x^2)$ induce a \textit{dessin d'enfant} on $R$ as a graph {\rm (c)}. 
The rest of proof follows from \cite[Proposition 3.2]{HS1}.
 \qed\end{pf}
Figure \ref{origami_dessin} shows an example of $4d$-fold cover $\beta=\beta_0\circ p:R\rightarrow P^1_\mathbb{C}$ branched over just three points $0,1,\infty \in P^1_\mathbb{C}$.  
The monodromy group of $\beta$ is generated by two permutations $\iota,\sigma=\sigma_{\mu,\nu}\in \mathrm{Sym}(\bar{I}_d^h\sqcup\bar{I}_d^v)$ 
defined by 
\begin{align}
  \left\{
    \begin{array}{ll}
      \iota(\pm \lambda _h)=\pm \lambda _v&\ \iota(\pm \lambda _v)=\mp\lambda  _h\\ 
      \sigma(\pm \lambda _h)=\mu(\pm \lambda )_h&\  \sigma(\pm \lambda_v)=\nu(\pm \lambda )_v
    \end{array}
    \right. \text{\ \ for each $i \in I_d$, }
\end{align}   
where $\bar{I}_d^\bullet=\{\pm1_\bullet,\ldots,\pm d_\bullet\}$ denotes a copy of $\bar{I}_d$. 
The permutation $\iota\sigma$ arranges the sheets of $\beta$ clockwise around each of the centers of cells in $R\setminus \beta^{-1}([0,1])$, which are the singularities of $(R,\phi)$. 
The permutation $\iota\sigma$ has even order, as does that of the pillowcase sphere. 
\begin{figure}[htbp]
\begin{center}
\includegraphics[width=130mm]{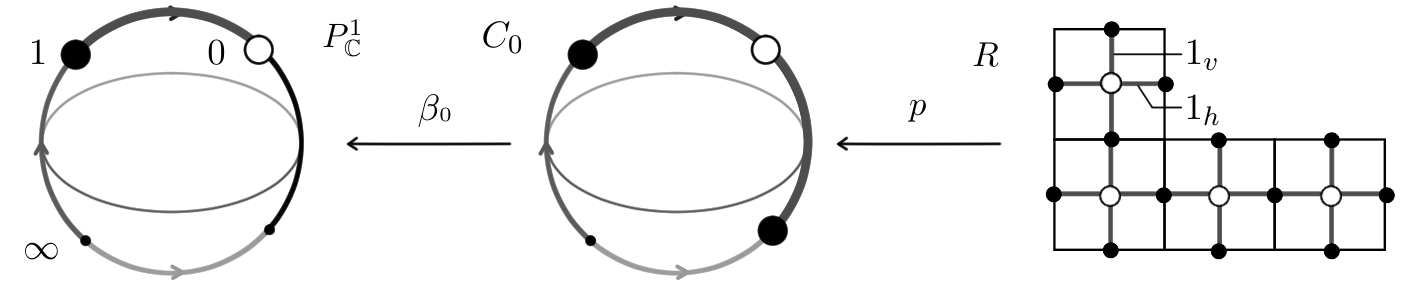}
  \caption{(origami in Figure \ref{origami_exm}, \ref{construction}) 
Suppose the bipartite graph $\beta^{-1}([0,1])$ embedded in $R$. 
The monodromy group of $\beta$ is generated by the two permutation $\iota$, $\sigma$ of edges   around the white, brack vertices respectively. 
Each edge is labeled by the index of the square it belongs to and its direction. 
For example, the horizontal edge adjacent to the right (resp.\ left) side of $\lambda$-th square is labeled by $+\lambda_h$ (resp.\ $-\lambda_h$).}
\label{origami_dessin}
\end{center}
\end{figure}

Note about the equivalences of the objects in Theorem \ref{gen_origami} as follows. 
The equivalence of a graph embedding into the origami as a ﬂat surface gives the equivalence of \textrm{(c)}. 
The conjugacy in $\bar{S}_d$ gives the equivalence of \textrm{(d)}. 

The Veech group of an origami is calculated using the following proposition. 

\begin{proposition}[{\cite[Section 5]{K}}]
There exists two permutations $\sigma_T,\sigma_S\in S_d$ such that the Veech group of an origami 
of degree $d$ is the stabilizer of the equivalence class under the action of $PSL(2,\mathbb{Z})$ on $\Omega_d$ defined by $[A](x,y,\varepsilon):=\theta^{-1}(\sigma_A^*(A\theta(\mathcal{O})))$ $([A]=[T],[S])$. 
\end{proposition}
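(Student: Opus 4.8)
The plan is to reduce membership in the Veech group to a combinatorial condition on $\Omega_d$ by transporting the affine deformation through the canonical double cover, where Schmith\"usen's description (Proposition~\ref{diagram}) is available. First I would record the standard fact that for a square-tiled surface $\mathcal{O}$ one has $[A]\in\Gamma(\mathcal{O})$ if and only if the affinely deformed surface $T_A(R,\phi)$ is isomorphic to $\mathcal{O}$ as a flat surface. Because $A\in SL(2,\mathbb{Z})$ maps $\mathbb{Z}^2$ bijectively to itself, the unit squares become area-one parallelograms with integral vertices, so the tiling of $T_A(R,\phi)$ can always be recut into unit squares, producing a degree-$d$ origami $[A]\cdot\mathcal{O}$; and since $-I$ acts trivially on the half-translation structure, this depends only on $[A]\in PSL(2,\mathbb{Z})$. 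As $[T]$ and $[S]$ generate $PSL(2,\mathbb{Z})$, it suffices to make this operation explicit for the two generators and to verify the defining relations.

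Next I would transport the deformation to the canonical double cover. The double cover is functorial and commutes with affine deformation, so $T_A$ lifts to the abelian origami $\theta(\mathcal{O})$ of degree $2d$ compatibly with the canonical involution $\tau$; recutting the lifted tiling into unit squares realizes, by Proposition~\ref{diagram}, exactly Schmith\"usen's abelian origami $A\,\theta(\mathcal{O})$, up to a bijective relabeling $\sigma_A^*$ of its $2d$ squares that aligns the recut tiling with the standard numbering. This relabeling is induced from a permutation $\sigma_A\in S_d$ through the sign-extension into $\bar{S}_d$, analogous to the operation $\sigma^\#$ of Lemma~\ref{isom}. Since $\theta$ is a bijection between $\Omega_d$ and origamis up to equivalence (Lemma~\ref{isom}), the double cover determines the origami, and therefore $[A]\cdot\mathcal{O}=\theta^{-1}\!\big(\sigma_A^*(A\,\theta(\mathcal{O}))\big)$, which is the asserted formula.

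It then remains to check that this prescription is a genuine $PSL(2,\mathbb{Z})$-action on $\Omega_d$ and that it descends to equivalence classes. For the latter I would feed the isomorphism criterion of Lemma~\ref{isom} through the formula, confirming that equivalent triples $(x,y,\varepsilon)$ are sent to equivalent triples; for the former I would use that Schmith\"usen's action already respects the relations $[S]^2=1$ and $([S][T])^3=1$, and verify that the relabelings $\sigma_T,\sigma_S$ compose compatibly, so that these relations survive the recutting. Granting this, $[A]\in\Gamma(\mathcal{O})$ if and only if $[A]\cdot\mathcal{O}\cong\mathcal{O}$, i.e.\ if and only if $[A]$ fixes the equivalence class of $(x,y,\varepsilon)$, so $\Gamma(\mathcal{O})$ is precisely the stabilizer.

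The main obstacle I anticipate is the bookkeeping of the relabelings $\sigma_T$ and $\sigma_S$: one must pin down the permutation by which geometric recutting of the sheared (resp.\ rotated) parallelograms permutes the squares --- which is governed by the vertical gluing data and by the sign vector $\varepsilon$ --- and then confirm that this matches Schmith\"usen's combinatorial action on the degree-$2d$ cover up to $\sigma_A^*$, compatibly with $\tau$. Verifying that these $\varepsilon$-twisted relabelings respect the $PSL(2,\mathbb{Z})$ relations, rather than merely the $SL(2,\mathbb{Z})$ relations governing the abelian cover, is where the sign data makes the argument genuinely more delicate than in the abelian case of Proposition~\ref{diagram}.
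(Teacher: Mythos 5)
The paper does not actually prove this proposition: it is imported verbatim from \cite[Section 5]{K}, so there is no in-text argument to compare against. Your overall strategy --- characterize $[A]\in\Gamma(\mathcal{O})$ by the condition that the $A$-deformed surface, recut into unit squares, is isomorphic to $\mathcal{O}$, and compute the recutting on the canonical double cover where Proposition~\ref{diagram} applies --- is the natural one and is surely what the cited reference does; the reduction of the stabilizer statement to the recutting formula is sound as far as it goes.

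Two of the steps you lean on, however, are genuinely gapped. First, your justification of well-definedness of the $PSL(2,\mathbb{Z})$-action is wrong as stated: Schmith\"usen's action is an $SL(2,\mathbb{Z})$-action, and $S^2=-I$ sends $(\mathbf{x},\mathbf{y})$ to $(\mathbf{x}^{-1},\mathbf{y}^{-1})$, which is \emph{not} equivalent to $(\mathbf{x},\mathbf{y})$ for a general abelian origami (only for those whose Veech group contains $-I$). The relation $[S]^2=1$ survives here only because every surface in the image of $\theta$ is a canonical double cover and hence carries the affine deck involution with derivative $-I$; that is the fact you must invoke, not a property of Schmith\"usen's action itself. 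Second, the clause ``there exist $\sigma_T,\sigma_S\in S_d$'' is the real combinatorial content of the proposition: one must show that $T\theta(\mathcal{O})$ and $S\theta(\mathcal{O})$ return to the image of $\theta$ after conjugating by a relabeling that is induced from $S_d$ (sign-equivariant), rather than by an arbitrary element of $\bar{S}_d$ or $\mathrm{Sym}(\bar{I}_d)$, and that the resulting relabelings can be chosen consistently. You explicitly defer this bookkeeping, but without it the argument reduces the proposition to itself. Finally, your equivalence only yields $\Gamma(\mathcal{O})\cap PSL(2,\mathbb{Z})=\mathrm{Stab}$ unless you also establish $\Gamma(\mathcal{O})\subseteq PSL(2,\mathbb{Z})$ (standard for origamis with at least one singularity, via integrality of holonomy vectors of saddle connections, but it needs to be said).
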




\section{Natural metric on a flat surface}
\label{sec3-1}
Let $(R,\phi) $ be a flat surface. 
The Euclidian metric lifts via $\phi$-coordinates to a flat metric 
on $R$, called the $\phi$\textit{-metric}. 
A geodesic of $\phi$-metric is called a $\phi$\textit{-geodesic}. 
Via the $\phi$-coordinates, a $\phi${-geodesic} is locally a line segment on the plane whose \textit{direction} is uniquely determined in $\mathbb{R}/\pi\mathbb{Z}$. 

\begin{definition}\ 

\begin{enumerate}
\item The \textit{direction} of a $\phi$-geodesic $\gamma$ is $\theta\in \mathbb{R}/\pi\mathbb{Z}$ where $\gamma$ is horizontal $e^{2\sqrt{-1}\theta}\phi$-geodesic. 
\item The $\phi$\textit{-cylinder} generated by a $\phi$-geodesic $\gamma$ is the union of all $\phi$-geodesics parallel (with same direction) and free homotopic to $\gamma$. We define the direction of a $\phi$-geodesic by the one of its generator. 
\item $\theta\in \mathbb{R}/\pi\mathbb{Z}$ is \textit{Jenkins-Strebel direction} of $(R,\phi)$ if almost every point in $R$ lies on some closed $\phi$-geodesic in the direction $\theta$. 
We denote the set of Jenkins-Strebel directions by $J(R,\phi)$. 
\end{enumerate}
\end{definition}
Note that any Jenkins-Strebel direction of flat surface of finite analytic type is \textit{finite} i.e.\ there are at most finitely many $\phi$-cylinders of that direction in $R$.
We say that a system $\gamma=(\gamma_1$, \ldots, $\gamma_p)$ of Jordan curves on $R$ is \textit{admissible} if none of the curves is homotopically trivial and any two distinct $\gamma_i,\gamma_j$ are neither crossing nor (freely) homotopic. 
For the existence of a holomorphic quadratic differential with one Jenkins-Strebel direction, the following result is known.
\begin{proposition}[{\cite[Theorem 21.1]{St}}]
Let $\gamma=(\gamma_1$, \ldots, $\gamma_p)$ be a finite `admissible' curve system on $R$, which satisfies bounded moduli condition for $\gamma$. Then for any $b=(b_1$, \ldots, $b_p)\in\mathbb{R}_+^p$ {there exists a holomrphic quadratic differential  $\phi$ on $R$ such that $0\in J(R,\phi)$} and $(R,\phi)$ is decomposed into cylinders $(V_1,...,V_p)$ where {each $V_j$ has homotopy type $\gamma_j$ and height $b_j$}. 
\end{proposition}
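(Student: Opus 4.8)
The plan is to realize $\phi$ as the extremal metric of a module (extremal-length) problem attached to the curve system; this is the route taken by Strebel, which is why the statement is his Theorem 21.1, and I will sketch that route and indicate where admissibility and the bounded-moduli hypothesis enter. First I would translate the homotopy data into an analytic extremal problem. Because we wish to prescribe the \emph{heights} $b_j$, the relevant constraint is on arcs that cross each prospective cylinder, not on its core curves: let $\Gamma_j$ denote the family of rectifiable arcs in $R$ separating the two sides of an annular neighborhood of $\gamma_j$, and consider conformal metrics $\rho\,|dz|$ normalized so that every arc in $\Gamma_j$ has $\rho$-length at least $b_j$ for each $j$. Among all such admissible metrics I would minimize the total area $A(\rho)=\int_R \rho^2\,dx\,dy$. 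Equivalently, and this is the form in which I would actually run the variation, one minimizes the $L^1$-norm $\|\phi\|=\int_R|\phi|$ over integrable holomorphic quadratic differentials subject to the height constraints $h_{\gamma_j}(\phi)\ge b_j$, where $h_{\gamma_j}(\phi)$ is the $\phi$-transverse measure ($\phi$-height) of the class $\gamma_j$.

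Second, I would establish existence and uniqueness of a minimizer $\rho_0=|\phi|^{1/2}$ and identify $\phi$ as a holomorphic quadratic differential. Existence follows from a normal-families and lower-semicontinuity argument for $A(\rho)$ on the admissible class, once admissibility of $\gamma$ guarantees this class is nonempty and the bounded-moduli condition prevents the infimum from being approached only in a degenerate limit (a cylinder of height $b_j$ but vanishing module). The variational characterization — perturbing $\rho_0$ within the admissible class and using that it has least area — forces $\rho_0^2\,dx\,dy$ to equal $|\phi|\,dx\,dy$ locally for a holomorphic $\phi$; this is the standard Teichm\"uller and Reich--Strebel-type argument, where the first variation of area against the length constraints yields the Cauchy--Riemann equations for $\phi$ off the critical set. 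Extremality simultaneously forces the horizontal trajectories of $\phi$ to be the closed curves realizing the constraints, so that $0\in J(R,\phi)$.

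Third, I would read off the cylinder decomposition. A Jenkins--Strebel differential whose nonclosed trajectories form a measure-zero critical graph decomposes $R$ into finitely many maximal ring domains swept out by closed horizontal trajectories; admissibility (no two $\gamma_i,\gamma_j$ homotopic, none trivial, none crossing) ensures these ring domains are in bijection with the classes $\gamma_1,\dots,\gamma_p$, giving cylinders $V_1,\dots,V_p$ with $V_j$ of homotopy type $\gamma_j$. Finally, the constraint $h_{\gamma_j}(\phi)=b_j$ is active at the minimizer (lowering any height would lower the area), which identifies the $\phi$-height of $V_j$ with the prescribed $b_j$.

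I expect the main obstacle to be the second step: proving that the extremal metric is genuinely $|\phi|$ for a \emph{holomorphic} quadratic differential with the correct global trajectory structure, rather than merely an abstract measurable minimizing metric. This regularity-and-nondegeneracy statement is the analytic heart of Jenkins--Strebel theory, and it is exactly here that the bounded-moduli hypothesis is indispensable: it bounds the moduli of the competing ring domains away from $0$ and $\infty$, so that the minimizer cannot degenerate and each of the $p$ classes is realized by a nondegenerate cylinder of finite height and finite circumference. Alternatively one could bypass the direct regularity analysis by invoking the Hubbard--Masur theorem, since the weighted multicurve $\sum_j b_j\gamma_j$ defines a measured foliation with closed leaves whose unique realizing quadratic differential is the desired $\phi$; but the self-contained proof is the extremal-length argument above.
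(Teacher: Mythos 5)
The paper states this proposition as a quoted result (Strebel, Theorem 21.1) and supplies no proof of its own, so there is no internal argument to compare against. Your sketch follows the standard extremal-metric route for the heights problem --- minimizing $\|\phi\|$ subject to height constraints on the crossing families, which is essentially Strebel's own approach --- and its outline is correct; the step you yourself flag as the analytic heart (regularity of the minimizing metric as $|\phi|$ for a holomorphic $\phi$, and nondegeneracy of all $p$ ring domains, which is exactly where the bounded-moduli hypothesis is used) is the part your sketch does not actually supply and would have to be filled in from Strebel's Chapter VI or via the Hubbard--Masur alternative you mention.
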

Let $f\in \mathrm{Aff}^+(R,\phi)$ and $D(f)=[A]=\begin{sbmatrix}a&b\\ c&d\end{sbmatrix}\in PSL(2,\mathbb{R})$. Then $f$ maps any line segment in the direction $\theta\in \mathbb{R}/\pi\mathbb{Z}$ to a line segment in the direction $A\theta:=\mathrm{arg}(T_A(e^{\sqrt{-1}\theta}))$. 
Let $f\in \mathrm{Aff}^+(R,\phi)$ be an affine mapping with derivative $D(f)=[A]=\begin{sbmatrix}a&b\\ c&d\end{sbmatrix}\in PSL(2,\mathbb{R})$. 
Then, $f$ maps any line segment in the direction $\theta\in \mathbb{R}/\pi\mathbb{Z}$ to a line segment in the direction $A\theta:=\mathrm{arg}(T_A(e^{i\theta}))$. 
We may observe that $f$ maps a $\phi$-cylinder of modulus $M$ to a $\phi$-cylinder of modulus $M/\sqrt{a^2+c^2}$. Since the list of moduli of $\phi$-cylinders of one direction are uniquely determined up to order, the following holds. 

\begin{lemma}\label{cyl}
Let $J(R,\phi)\neq\emptyset$ and $(M_1^\theta$, \ldots, $M_{n_\theta}^\theta)\in\mathbb{R}_{>0}^{n_\theta}$ be the list of ascending order of moduli of the $\phi$-cylinders in the direction $\theta\in J(R,\phi)$.
 If $[A]=\begin{sbmatrix}a&b\\ c&d\end{sbmatrix}\in PSL(2,\mathbb{R})$ belongs to $\Gamma (R,\phi)$ then for any $\theta\in J(R,\phi)$ the following holds. 
\begin{itemize}
\item $A\theta\in J(R,\phi)$.
\item $n_{A\theta}=n_\theta=:n$.
\item 
$M_j^{A\theta}=M_j^{\theta}/\sqrt{a^2+c^2}$ for $j=1$, \ldots, $n$. 
\end{itemize}
 \end{lemma}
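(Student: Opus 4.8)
The plan is to derive all three assertions from the two facts established immediately before the statement: that an $f\in\mathrm{Aff}^+(R,\phi)$ with $D_f=[A]$ carries a line segment of direction $\theta$ to one of direction $A\theta$, and that it sends a $\phi$-cylinder of modulus $M$ to a $\phi$-cylinder of modulus $M/\sqrt{a^2+c^2}$. First I would fix such an $f$ realizing the given $[A]\in\Gamma(R,\phi)$, and record that $f$ is an orientation-preserving self-homeomorphism of $R$ permuting $\mathrm{Sing}(R,\phi)$; since $A\in SL(2,\mathbb{R})$ has determinant $1$, the local form $z\mapsto T_A(z)+c$ is area-preserving in $\phi$-coordinates, so $f$ preserves the finite area measure $|\phi|$ on $R$. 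The class ambiguity $\pm I$ in $[A]$ is harmless, since $-I$ fixes every direction in $\mathbb{R}/\pi\mathbb{Z}$ and every modulus.

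To see $A\theta\in J(R,\phi)$, recall that $\theta\in J(R,\phi)$ means the set $U_\theta$ of points lying on some closed $\phi$-geodesic of direction $\theta$ has full measure. Because $f$ is a homeomorphism carrying each closed direction-$\theta$ geodesic to a closed curve that is again a $\phi$-geodesic of direction $A\theta$, we have $f(U_\theta)\subseteq U_{A\theta}$; and since $f$ is an area-preserving bijection, the complement of $U_\theta$ has measure zero and hence so does its image, so $f(U_\theta)$ is again of full measure. Thus almost every point of $R$ lies on a closed $\phi$-geodesic of direction $A\theta$, i.e.\ $A\theta\in J(R,\phi)$.

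For the count and the moduli I would argue that $f$ induces a bijection between the $\phi$-cylinders of direction $\theta$ and those of direction $A\theta$. Indeed $f$ sends a maximal family of parallel, freely homotopic closed direction-$\theta$ geodesics to such a family in direction $A\theta$, because free homotopy classes and the parallel relation are preserved by a homeomorphism acting affinely on directions; so each direction-$\theta$ cylinder maps to a direction-$A\theta$ cylinder. Applying the same reasoning to $f^{-1}$, whose derivative $[A^{-1}]$ sends direction $A\theta$ back to $\theta$, supplies the inverse map, whence $n_{A\theta}=n_\theta=:n$. Finally, under this bijection every modulus is multiplied by the single positive scalar $1/\sqrt{a^2+c^2}$, which is common to all the cylinders in the fixed direction $\theta$ and therefore preserves the ascending order of their moduli list; matching the $j$-th entries of the two ordered lists yields $M_j^{A\theta}=M_j^\theta/\sqrt{a^2+c^2}$ for each $j$.

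The only substantive input is the modulus-scaling observation, which I treat as given here; were it to be unpacked, the delicate point would be the transformation law of the conformal modulus of a flat cylinder under the affine map $T_A$ together with the verification that the resulting scaling factor is a constant common to all cylinders of a fixed direction, so that it acts as a genuine dilation on the ordered moduli list rather than as a cylinder-dependent rescaling. Everything else is the bookkeeping of transporting a cylinder decomposition along a measure-preserving homeomorphism and checking that a uniform positive rescaling respects the chosen ordering.
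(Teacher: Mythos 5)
Your proposal is correct and follows essentially the same route as the paper, which proves the lemma only by the observation stated immediately before it (affine maps send direction-$\theta$ segments to direction-$A\theta$ segments and rescale every cylinder modulus by the same factor $1/\sqrt{a^2+c^2}$, so the ordered moduli lists match). Your write-up simply fills in the bookkeeping the paper leaves implicit — measure preservation for the Jenkins--Strebel property, the cylinder bijection via $f$ and $f^{-1}$, and order preservation under a uniform rescaling — while, like the paper, taking the modulus-scaling formula itself as given.
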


From now on, we assume that there exist two distinct Jenkins-Strebel directions $\theta_1, \theta_2\in J(R,\phi)$. 
Then $R$ is obtained by finite collections of parallelograms in the way presented in {\cite[Theorem2]{EG}}, in which we conclude $R$ is finite analytic type even for more general settings. 
We review that construction. 

Let $ i=1,2$ and $\alpha_i=e^{\sqrt{-1}\theta_i}\in\mathbb{R}/\pi\mathbb{Z}$. We have a decomposition of $R$ into the $\phi$-cylinders $W_1^i,...,W_{n_i}^i$ in direction $\theta_i$. 
For each $i,j$, an analytic continuation of local inverse of $\phi$-coordinates gives a holomorphic cover $F^i_j:S^i_j\rightarrow W_j^i$ from a strip $S_j^i=\{0<\mathrm{Im}z<h_j^i\}\subset \mathbb{C}$ with $\mathrm{Deck}(F_j^i)=\langle z\mapsto z+c_j^i \rangle$ for some $h_j^i, c_j^i>0$ (See Figure \ref{cov_cyl}). 
We denote by $z_j,w_j$ the local $\phi$-coordinates in $W_j^1$, $W_j^2$. By construction, $F_j^{1*}(\alpha_i\phi)={dz_j}^2$ and $F_j^{2*}(\alpha_i\phi)={dw_j}^2$ hold. 

\begin{figure}[htbp]
\begin{center}
\includegraphics[width=120mm]{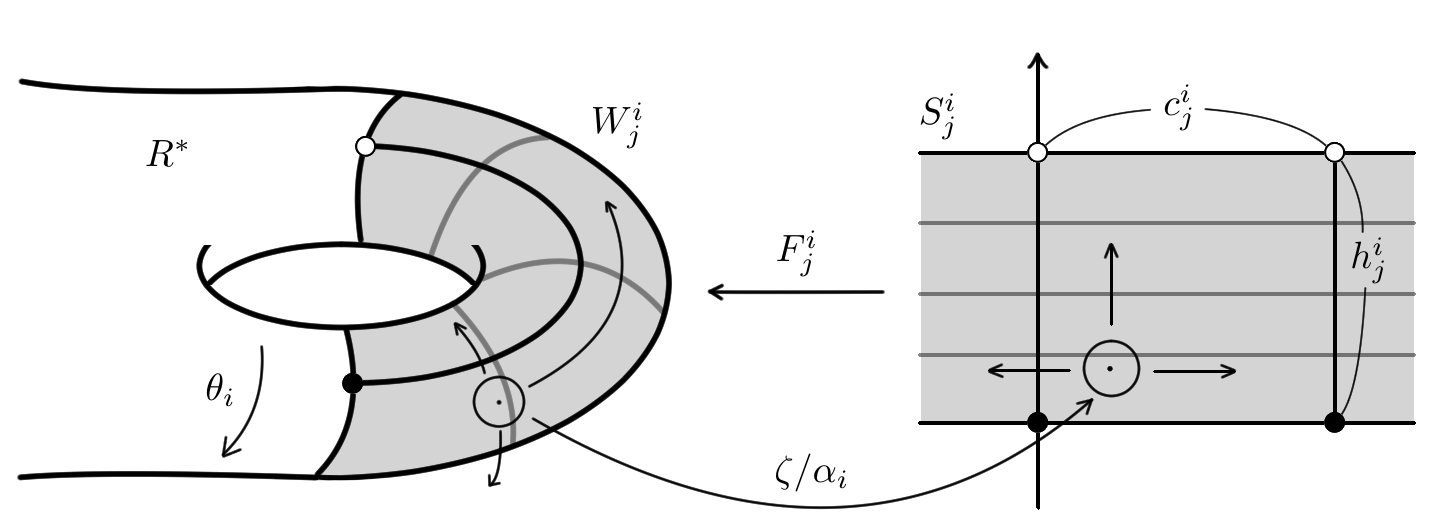}
  \caption{$\phi$-cylinder $W_j^i$ and covering $F^i_j$}
\label{cov_cyl}    
\end{center}
\end{figure}

For any $p\in S_j^1$, there is a neighborhood $U$ in which $F_j^1=F_k^2\circ f$ for some $k$ and some holomorphic function $f:U\rightarrow S_k^2$. 
By the formula 
\begin{equation}f^*{dw_k}^2=f^*(F_k^{2*}(\alpha_2\phi))=F_j^{1*}(\alpha_2\phi)=(\alpha_2/\alpha_1){dz_j}^2
\end{equation}   
$f$ is continuated on $S_j^1$ by the form $f(z_j^1)=\alpha z_j^1+\beta$ where $\alpha=\pm\sqrt{\alpha_2/\alpha_1}$ and $\beta\in \mathbb{C}$ (see Figure \ref{intersection}). 
The intersection $V_{j,k}=S^1_j\cap f^{-1}(S^2_k)$ is a parallelogram isometrically mapped to $W^1_j\cap W^2_k$. The collection $(V_{j,k})_{j=1}^{n_1}$ fills the strip region $S^1_j$ by translations in  $\mathrm{Deck}(F_j^1)=\langle z\mapsto z+c_j^1\rangle$, $j=1$, \ldots, $n_1$. 
The same can be said for $(f^{-1}(V_{j,k}))_{k=1}^{n_2}$ filling the strip region $S_k^2$. 

\begin{figure}[htbp]
\begin{center}
\includegraphics[width=120mm]{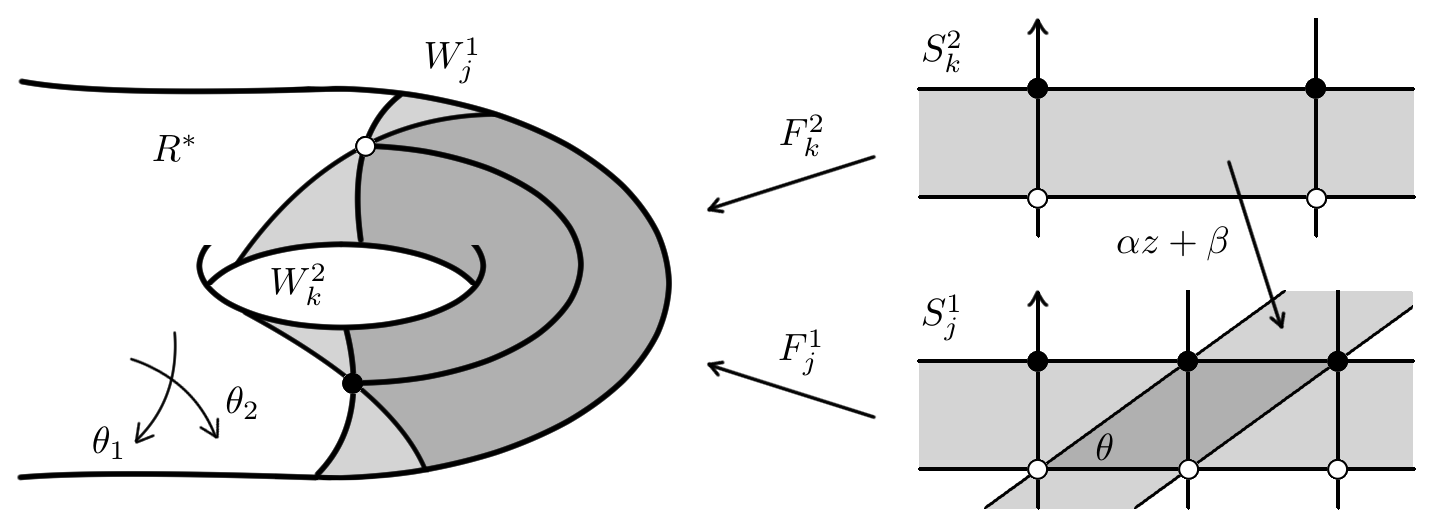}
  \caption{Parallelogram as an intersection of two $\phi$-cylinders}
\label{intersection}    
\end{center}
\end{figure}


Thus the surface $R$ is decomposed into the collection of regions $(W_{j}^1\cap W_{k}^2)_{j,k}$, each of which is empty or isomorphic to a parallelogram on the plane. 
Suppose $(j,k)$ in the latter case. 
Such a parallelogram $V_{j,k}$ is uniquely determined up to half-translations. 
We call them the $(\theta_1,\theta_2)$-\textit{parallelograms} of $(R,\phi)$. 
Via $F_j^1$ and $F_k^2$, the isomorphism between $W_{j}^1\cap W_{k}^2$ and $V_{j,k}$ is continued over the boundary. Thus $(R,\phi)$ is isomorphic to the surface obtained by gluing $(\theta_1,\theta_2)$-parallelograms along boundary edges in the way that respects the adjacencies determined by the continuations of the local isomorphisms. 


A $(\theta_1,\theta_2)$-parallelogram $V_{j,k}$ has boundary edges in the directions $\theta_1,\theta_2$ and a modulus $M(V_{j,k})=(h_{j}^1/h_{k}^2) \sin |\theta_1-\theta_2|$. 
On the plane, an affine map with derivative $A\in SL(2,\mathbb{Z})$ maps a $(\theta_1,\theta_2)$-parallelogram to an $(A\theta_1,A\theta_2)$-parallelogram whose modulus is a scalar multiple of $\rho_{A,\theta_1,\theta_2}:=|T_A(e^{\sqrt{-1}\theta_2})|/|T_A(e^{\sqrt{-1}\theta_1})|$. 
The same holds for $(R,\phi)$ as follows. 

\begin{lemma}\label{paral}
Let $(R,\phi)$ be a flat surface with two distinct Jenkins-Strebel directions $\theta_1,\theta_2$, and $\{V_\lambda\}_{\lambda=1}^d$ be the $(\theta_1,\theta_2)$-parallelograms of $(R,\phi)$. 
If a matrix $[A]\in PSL(2,\mathbb{R})$ belongs to $\Gamma(R,\phi)$, then $M(f(V_j))=\rho_{A,\theta_1,\theta_2}\cdot M(V_1)$ holds for $j=1$, \ldots, $d$. 


\end{lemma}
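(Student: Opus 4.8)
The plan is to realize the statement as the transfer to $(R,\phi)$ of the purely planar modulus computation recorded immediately above the lemma, the bridge being that an affine self-homeomorphism is, in the natural coordinates, nothing more than $T_A$ followed by a translation. First I would use $[A]\in\Gamma(R,\phi)$ to fix $f\in\mathrm{Aff}^+(R,\phi)$ with $D_f=[A]$. Applying Lemma \ref{cyl} to $\theta_1$ and $\theta_2$ gives $A\theta_1,A\theta_2\in J(R,\phi)$; moreover $f$, being a homeomorphism that carries closed $\phi$-geodesics of direction $\theta_i$ to closed $\phi$-geodesics of direction $A\theta_i$, sends the cylinder decomposition $W_1^i,\ldots,W_{n_i}^i$ in direction $\theta_i$ onto the cylinder decomposition of $(R,\phi)$ in direction $A\theta_i$. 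Since $\theta_1\neq\theta_2$ forces $A\theta_1\neq A\theta_2$, the pair $(A\theta_1,A\theta_2)$ is again a pair of distinct Jenkins--Strebel directions and $(R,\phi)$ carries a genuine $(A\theta_1,A\theta_2)$-parallelogram decomposition.

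Next I would verify that $f$ respects the two-directional cell structure cell by cell. Writing a $(\theta_1,\theta_2)$-parallelogram as an intersection $V_j=W_a^1\cap W_b^2$ of a $\theta_1$-cylinder with a $\theta_2$-cylinder, its image $f(V_j)=f(W_a^1)\cap f(W_b^2)$ is the intersection of an $A\theta_1$-cylinder with an $A\theta_2$-cylinder, hence a single $(A\theta_1,A\theta_2)$-parallelogram of $(R,\phi)$ (generally a new one, since the directions have changed). Because $V_j$ is connected and $f$ is locally of the form $z\mapsto T_A(z)+c$ in the natural coordinates, the translation constant is uniform over $V_j$, so $f|_{V_j}$ is a single affine map with projective derivative $[A]$; the sign ambiguity in $PSL(2,\mathbb{R})$ merely post-composes with $-\mathrm{id}$ and does not affect the shape of the image.

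It then remains to invoke the planar scaling. The modulus $M(V_j)=(h_a^1/h_b^2)\sin|\theta_1-\theta_2|$ is determined solely by the Euclidean shape of $V_j$ read in the $\phi$-coordinates, that is, by its two edge vectors in the directions $\theta_1,\theta_2$; under $f|_{V_j}=T_A+c$ these edge vectors transform exactly as under the planar affine map $T_A$. Consequently $M(f(V_j))$ is obtained from $M(V_j)$ by precisely the scaling established on the plane above, namely multiplication by $\rho_{A,\theta_1,\theta_2}=|T_A(e^{\sqrt{-1}\theta_2})|/|T_A(e^{\sqrt{-1}\theta_1})|$, which yields $M(f(V_j))=\rho_{A,\theta_1,\theta_2}\,M(V_j)$ for every $j$, as asserted.

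The genuine content, I expect, lies in the middle step: checking that $f$ carries each intersection cell $W_a^1\cap W_b^2$ onto a single cell of the image decomposition rather than spreading it across several, and that the local affine normal form of $f$ holds with one uniform constant on the whole connected cell. Once that compatibility between the $(\theta_1,\theta_2)$- and $(A\theta_1,A\theta_2)$-decompositions is secured, the modulus identity follows immediately from the affine normal form and the planar computation, with no further calculation required.
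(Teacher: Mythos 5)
Your proposal is correct and follows essentially the same route as the paper, which states the lemma as an immediate transfer of the planar modulus computation via the local normal form $z\mapsto T_A(z)+c$ and the cylinder-to-cylinder correspondence from Lemma \ref{cyl}; your middle step merely makes explicit the cell-to-cell compatibility that the paper leaves implicit in its parallelogram-decomposition construction. (Your reading of the conclusion as $M(f(V_j))=\rho_{A,\theta_1,\theta_2}\,M(V_j)$ is the evidently intended one.)
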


\medskip\medskip
\section{Combinatorial formulation}\label{sec3-3}
We continue assumptions and notations in the last section. 
Stretching and rotating $\phi$-cylinders lead to a homeomorphism from $(R,\phi)$ to an origami that respects the markings determined by boundaries of parallelograms. 
In this way, $(R,\phi)$ and $\theta_1,\theta_2\in J(R,\phi)$ determine a unique origami with additional data of moduli list $\mathbf{M}=(M_\lambda)_{\lambda=1}^d$ of the $(\theta_1,\theta_2)$-parallelograms and directions $\theta_1,\theta_2$. 
Conversely, an origami $\mathcal{O}$ and a moduli list $\mathbf{M}=(M_\lambda)_{\lambda=1}^d$ compatible with $\mathcal{O}$ is supposed to give a flat surface with decomposition as above for each pair $(\theta_1,\theta_2)$ of distinct directions assigined. 
\if0
We use the representation {\rm (a)} and {\rm (d)} of an origami in Theorem \ref{gen_origami}. 
The monodromy group of the $4d$-fold cover $\beta=\beta_0\circ p:R\rightarrow P^1_\mathbb{C}$ is generated by two permutations $\iota,\sigma=\sigma_{\mu,\nu}\in \mathrm{Sym}(\{\pm1_h$, \ldots, $\pm d_h\}\sqcup\{\pm1_v$, \ldots, $\pm d_v\})$ defined by 
\begin{align*}\iota(\pm\lambda_h)=\pm\lambda_h,&& \iota(\pm\lambda_v)=\mp\lambda_v,& & \sigma(\pm\lambda_h)=\pm\mu(\lambda)_h, && \sigma(\pm\lambda_v)=\pm\nu(\lambda)_v
\end{align*}   
for each $\lambda\in\{1$, \ldots, $ d\}$. Figure \ref{origami_dessin} shows its graph representation. 
\begin{figure}[htbp]
\begin{center}
\includegraphics[width=120mm]{origami_dessin_mono.png}
  \caption{(cf.\ Figure \ref{origami_exm}) The monodromy group of $\beta$ is generated by the permutation $\iota$ around the white vertices and the permutation $\sigma$ around the brack vertices. Each edge is labeled by the index of adjacent square and its direction. For example, the horizontal edge adjacent to the right (resp.\ left) side of $\lambda$-th square is labeled by $+\lambda_h$ (resp.\ $-\lambda_h$).}
\label{origami_dessin}
\end{center}
\end{figure}
\fi

We use the notation in Theorem \ref{gen_origami}. 
We will define the compatibility of $\mathbf{M}=(M_\lambda)_{\lambda=1}^d\in\mathbb{R}^d_{>0}$ with an origami $\mathcal{O}=(\mu,\nu)$, which purposes that we can glue $d$ rectangles $V_1$, \ldots, $V_d$ with $M(V_\lambda)=M_\lambda$ along edges to form a flat surface $(R,\phi)$ in the same way as $\mathcal{O}$. 


Let $\left| \kappa\right| =\lambda$ for each $\kappa=\pm\lambda_{\bullet} \in\bar{I}_d^h\sqcup\bar{I}_d^v$. 
Then $|\mu(\kappa)|$ (resp.\ $|\nu(\kappa)|$) represents the rectangle adjacent to the right (resp.\ upper) side of $|\kappa|$-th rectangle. 
Then the lengths of their horizontal (resp.\  vertical) edges should be related by a factor of $K_{\kappa,\mu}=M_{|\kappa|}/M_{|\mu(\kappa)|}$ (resp.\ $K_{\kappa,\nu}=M_{|\nu(\kappa)|}/M_{|\kappa|}$). 
When we go along  a path $\gamma$ on ${R}^*$ joining two rectangles, indices of rectangles we pass through and entry directions are interpreted as a path in the bipartite graph $\beta^{-1}([0,1])$. 
It is described in terms of monodromy of the form $\iota^{k_1}\sigma\cdots\iota^{k_m}\sigma\in\bar{S}_d$, which is a word of $\iota^{k}\sigma_k$ $(k=0,1,2,3)$. 
We may set starting edge as $+\lambda_h$, then we have $\sigma_{k_j}=\mu$ for $k_j=0,2$ and $\sigma_{k_j}=\nu$ for $k_j=1,3$. 
We define as follows. 
\begin{align}\label{loop}
K_\mathcal{O}(\gamma,\mathbf{M}):=\prod_{j=1}^{m}K_{\iota^{k_1}\sigma\cdots\iota^{k_{j}}\sigma(+\lambda_h),\ \sigma_{k_j}}
\end{align}   



\if0
\begin{lemma}Let $\mathbf{M}=(M_\lambda)_{\lambda=1}^d\in \mathbb{R}^{d}_{>0}$, $\mathcal{O}=(\mu,\nu)$ be an origami of degree $d$, and $\gamma_1$, \ldots,  $\gamma_n\in\pi_1(R^*)$ be the loops around the singularities $p_1$, \ldots, $p_n$ on $\mathcal{O}$. 
$\mathbf{M}$ is compatible with $\mathcal{O}$ if and only if $K(\gamma_j)=1$ for $j=1$, \ldots, $n$. 
\end{lemma}
\begin{proof}
The compatibility of lengths of the rectangles placed along a path $\gamma$ on $R^*$ fails only when $\gamma$ contains a loop. 
$\pi_1(R^*)$ is generated by $\gamma_1$, \ldots,  $\gamma_n$ and the core curves of the horizontal and vertical cylinders. 
We have $K(\gamma)=1$ for the core curve $\gamma$ of any horizontal or vertical cylinder. 
As $K$ defines a group homomorphism $\pi_1(R^*)\rightarrow \mathbb{R}_{>0}$, the claim follows. 
\qed\end{proof}
\fi

\if0
Recall that each $\gamma_j$ corresponds to a cycle $(\kappa_1$ $\cdots$ $\kappa_{2m})$ of the permutation $\iota\sigma$. 
We may assume $\kappa_1\in\{\pm1_h$, \ldots, $\pm d_h\}$ without loss of generality. 
Then we have 
\begin{align}
|\kappa_{2k}|=|\mu(\kappa_{2k-1})|,\ |\kappa_{2k+1}|=|\nu(\kappa_{2k-1})|
\end{align}   
 for each $k=1$, \ldots, $m$ and $\kappa_{2m+1}:=\kappa_1$. $K(\gamma_j)$ is calculated as follows. 
\begin{align}
K(\gamma_j)&=\prod_{k=1}^{m}K_{\kappa_{2k-1},\mu}K_{\kappa_{2k},\nu}
\nonumber\\&=\prod_{k=1}^{m}\dfrac{M_{|\kappa_{2k-1}|}}{M_{|\mu(\kappa_{2k-1})|}}\dfrac{M_{|\nu(\kappa_{2k})|}}{M_{|\kappa_{2k}|}}
\nonumber\\
&=\prod_{k=1}^{m}\dfrac{M_{|\kappa_{2k-1}|}}{M_{|\kappa_{2k}|}}\dfrac{M_{|\kappa_{2k+1}|}}{M_{|\kappa_{2k}|}}
\nonumber\\
&=\left( \prod_{k=1}^{m}\dfrac{M_{|\kappa_{2k-1}|}}{M_{|\kappa_{2k}|}}\right)^2
\end{align}   
\fi

\begin{definition}\label{compatible}Let $\mathcal{O}=(\mu,\nu)$, $\mathcal{O}_i=(\mu_i,\nu_i)$ be origamis of degree $d$. 
\begin{enumerate}
\item 
We call $\mathbf{M}=(M_\lambda)_{\lambda=1}^d\in \mathbb{R}^{d}_{>0}$ a \textit{moduli list compatible with} $\mathcal{O}$ 
if $K_\mathcal{O}(\gamma,\mathbf{M})=1$ for any $\gamma\in\pi_1(\mathcal{O}^*)$. ($\mathcal{O}^*$ is the flat surface $\mathcal{O}$ punctured at all the corner points.) 
\if0
if the formula
\begin{align}\label{formula_compatible}
\prod_{k=1}^{m}{M_{|\kappa_{2k}|}}=\prod_{k=1}^{m}{M_{|\kappa_{2k-1}|}}
\end{align}   
holds for any cycle $(\kappa_1$ $\cdots$ $\kappa_{2m})$ of the permutation $\iota\sigma_{\mu,\nu}$. 
\fi
\item Let $\mathbf{M}_i=(M^i_\lambda)_{\lambda=1}^{d}\in \mathbb{R}^{d}_{>0}$ be a moduli list compatible with $\mathcal{O}_i$ for $i=1,2$. 
We say that $(\mathcal{O}_1,\mathbf{M}_1)$ and $(\mathcal{O}_2,\mathbf{M}_2)$ are equivalent if there exists $\tau\in \bar{S}_d$ such that the following holds for $\lambda=1, \ldots, d$. 
\begin{align}
\mu_1=\tau^*\mu_2, \ \nu_1=\tau^*\nu_2, \ M_\lambda^1=M^2_{|\tau(\lambda)|}
\end{align}   
\end{enumerate}
\end{definition}
Observe that an isomorphism between two flat surfaces with two finite Jenkins-Strebel directions naturally induces an equivalence between two origamis with compatible moduli lists. 

\begin{theorem}\label{determined}
Let $\theta_1,\theta_2\in \mathbb{R}/\pi\mathbb{Z}$ be two distinct directions. 
A flat surface $(R,\phi)$ such that $\theta_1,\theta_2 \in J(R,\phi)$ is up to equivalence uniquely determined by an origami 
with a compatible moduli list. 
\end{theorem}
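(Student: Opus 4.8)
The plan is to realise the correspondence as a pair of mutually inverse constructions, up to the relevant equivalences, between flat surfaces $(R,\phi)$ having $\theta_1,\theta_2$ among their Jenkins--Strebel directions and pairs $(\mathcal{O},\mathbf{M})$ consisting of an origami together with a compatible moduli list. One inclusion is essentially already in hand from the decomposition set up in Section \ref{sec3-1}, so the substance is the reverse construction and the verification that the two passages undo one another on equivalence classes.

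For the forward map, I would take $(R,\phi)$ with $\theta_1,\theta_2\in J(R,\phi)$ and invoke the tiling of Section \ref{sec3-1}: $R$ is assembled from the $(\theta_1,\theta_2)$-parallelograms $V_1,\dots,V_d$, glued to their neighbours along edges in directions $\theta_1$ and $\theta_2$. Recording, for each tile, which tile abuts it across each of its edges (together with the matching of edge orientations) yields two fixed-point-free involutions $\mu,\nu\in\mathrm{Sym}\,\bar{I}_d$ generating, with sign inversion, a transitive group; by Theorem \ref{gen_origami}(d) this is an origami $\mathcal{O}=(\mu,\nu)$. Putting $M_\lambda=M(V_\lambda)$ produces the list $\mathbf{M}$, and compatibility is automatic: since the tiles genuinely close up to the surface $R$, the edge lengths match consistently along every loop, which is exactly the statement $K_\mathcal{O}(\gamma,\mathbf{M})=1$ for all $\gamma\in\pi_1(\mathcal{O}^*)$.

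The reverse construction is where the real work sits, and I expect it to be the principal obstacle. Given $(\mathcal{O},\mathbf{M})$ with $\mathbf{M}$ compatible, I fix the directions $\theta_1,\theta_2$ and build the tiles: each $V_\lambda$ is a parallelogram with edges in directions $\theta_1,\theta_2$, determined up to translation by the lengths of its two edges, and the modulus $M_\lambda$ together with the factor $\sin|\theta_1-\theta_2|$ pins down the ratio of those lengths. Propagating from a chosen tile across the adjacencies prescribed by $(\mu,\nu)$, each edge-matching requirement determines the next tile completely while multiplying the transported edge length by the corresponding factor $K_{\cdot,\mu}$ or $K_{\cdot,\nu}$. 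Taking logarithms turns these requirements into a linear system in the edge lengths, whose solvability is governed by the monodromy around loops; that obstruction vanishes exactly when $K_\mathcal{O}(\gamma,\mathbf{M})=1$ for all $\gamma\in\pi_1(\mathcal{O}^*)$, so compatibility delivers a consistent family of tiles, unique up to one global scale. I would then glue them along their edges following $(\mu,\nu)$ and check that the identifications are half-translations away from the corner points, that the cone angles assemble correctly at the vertices, and that the outcome $(R,\phi)$ is a bona fide flat surface with $\theta_1,\theta_2\in J(R,\phi)$; its finite analytic type follows as in \cite{EG} from the finiteness of the tiling.

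Finally I would match the equivalences. The observation recorded just before the statement already gives that an isomorphism of flat surfaces respecting $\theta_1,\theta_2$ permutes the parallelograms and hence induces a relabelling $\tau\in\bar{S}_d$ realising Definition \ref{compatible}(b); conversely, such a $\tau$ carries the tiling of one glued surface onto that of the other, producing an isomorphism, while the leftover global scale is absorbed by normalising the total area (equivalently, by the $PSL(2,\mathbb{R})$-class of the flat structure). Checking that both round trips are the identity on equivalence classes then yields the asserted bijection and completes the proof.
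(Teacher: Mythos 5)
Your proposal is correct and follows essentially the same route as the paper: the forward direction via the Earle--Gardiner $(\theta_1,\theta_2)$-parallelogram decomposition, and the reverse direction by observing that $K_\mathcal{O}(\cdot,\mathbf{M})$ is a homomorphism $\pi_1(\mathcal{O}^*)\to\mathbb{R}_{>0}$ whose triviality is exactly the (log-linear) obstruction to propagating consistent edge lengths, which is precisely the paper's system $A_\mathcal{O}\cdot\mathbf{M}=0$. Your treatment is in fact more explicit than the paper's terse argument, notably in spelling out the gluing verification and the normalization of the residual global scale.
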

The mapping $K_\mathcal{O}(\hspace{2pt}\cdot\hspace{2pt},\mathbf{M})$ defines a group homomorphism $\pi_1(\mathcal{O}^*)\rightarrow \mathbb{R}_{>0}$. 
The compatibility of lengths for the rectangles placed along a path $\gamma$ on $R^*$ fails only when $\gamma$ contains a loop. 
We may determine the compatibility by choosing a finite generating system of $\pi_1(\mathcal{O}^*)$. 
As the maping $K_\mathcal{O}(\gamma,\cdot\hspace{2pt})$ is regarded as a linear map via the conjugation by the logarithm, we obtain an integer matrix $A_\mathcal{O}$ with $d$ rows representing a linear equation $A_\mathcal{O}\cdot\mathbf{M}=0$ to ensure compatibility. 

For each $(x_1,\ldots x_d)\in \mathrm{Ker}A_\mathcal{O}$, an isomorphism on the flat surface $(\mathcal{O},(e^{x_1},\ldots e^{x_d}))$ induces a cell-to-cell correspondence $\sigma\in \bar{S}_d$ that commutes with $\mu,\nu$. 
It acts by permutating coordinates with indices in the same orbit of the centralizer $C_\mathcal{O}=\mathrm{Cent}_{\bar{S}_d}( \mu,\nu)$.

\begin{corollary}Let  $\theta_1,\theta_2\in \mathbb{R}/\pi\mathbb{Z}$ be two distinct directions and $\mathcal{O}=(\mu,\nu)$ be an origami of degree $d$. 
Then the family of flat surfaces with two finite Jenkins-Strebel directions $\theta_1,\theta_2$ inducing origami $\mathcal{O}$ is globally parametrized in the quotient $\mathrm{Ker}A_\mathcal{O}/C_\mathcal{O}$. 
\end{corollary}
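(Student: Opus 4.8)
The plan is to deduce this directly from Theorem~\ref{determined} together with the linearization sketched in the discussion above, reading the equivalence relation of Definition~\ref{compatible}(b) as a group action. By Theorem~\ref{determined}, once the two directions $\theta_1,\theta_2$ are fixed, a flat surface with $\theta_1,\theta_2\in J(R,\phi)$ inducing $\mathcal{O}$ is determined up to equivalence by a moduli list $\mathbf{M}$ compatible with $\mathcal{O}$, taken modulo the equivalence of Definition~\ref{compatible}(b). Hence it suffices to identify the set of compatible lists modulo this equivalence with $\mathrm{Ker}\,A_\mathcal{O}/C_\mathcal{O}$.

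First I would pass to logarithmic coordinates $x_\lambda=\log M_\lambda$, which turns the multiplicative quantity $K_\mathcal{O}(\gamma,\mathbf{M})$ into the value of a linear functional in $x=(x_1,\ldots,x_d)$: each factor of \eqref{loop} is a ratio $M_{|\kappa|}/M_{|\mu(\kappa)|}$ or $M_{|\nu(\kappa)|}/M_{|\kappa|}$, whose logarithm is a difference of two coordinates. Since $K_\mathcal{O}(\,\cdot\,,\mathbf{M})$ is a homomorphism $\pi_1(\mathcal{O}^*)\to\mathbb{R}_{>0}$, the compatibility condition $K_\mathcal{O}(\gamma,\mathbf{M})=1$ for all $\gamma$ need only be imposed on a finite generating system of $\pi_1(\mathcal{O}^*)$; collecting the associated linear forms yields the integer matrix $A_\mathcal{O}$, and $\exp\colon\mathbb{R}^d\to\mathbb{R}_{>0}^d$ restricts to a bijection between $\mathrm{Ker}\,A_\mathcal{O}$ and the set of moduli lists compatible with $\mathcal{O}$. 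Because $\mathrm{Ker}\,A_\mathcal{O}$ is a linear subspace and $\exp$ is a global diffeomorphism, this parametrization is global, which accounts for the word ``globally'' in the statement.

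Next I would interpret the equivalence relation in these coordinates. Specializing Definition~\ref{compatible}(b) to $\mathcal{O}_1=\mathcal{O}_2=\mathcal{O}$, a relating permutation $\tau\in\bar{S}_d$ must satisfy $\mu=\tau^*\mu$ and $\nu=\tau^*\nu$, that is $\tau\in C_\mathcal{O}=\mathrm{Cent}_{\bar{S}_d}(\mu,\nu)$, and it identifies $\mathbf{M}^1$ with $\mathbf{M}^2$ by $M_\lambda^1=M_{|\tau(\lambda)|}^2$; in log-coordinates this is exactly the coordinate permutation $x_\lambda\mapsto x_{|\tau(\lambda)|}$. Thus $C_\mathcal{O}$ acts on $\mathbb{R}^d$ by permuting coordinates, and two compatible lists are equivalent in the sense of Definition~\ref{compatible}(b) precisely when their log-vectors lie in the same $C_\mathcal{O}$-orbit. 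Combining with the previous step gives a bijection between equivalence classes of compatible lists and $\mathrm{Ker}\,A_\mathcal{O}/C_\mathcal{O}$, and Theorem~\ref{determined} then delivers the asserted global parametrization of the family of flat surfaces.

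The step that needs the most care is checking that the $C_\mathcal{O}$-action genuinely preserves $\mathrm{Ker}\,A_\mathcal{O}$, so that the quotient is well defined. This should follow because an element $\tau$ commuting with $\mu$ and $\nu$ commutes with $\iota\sigma_{\mu,\nu}$, and therefore permutes the cycles of $\iota\sigma_{\mu,\nu}$ that generate the compatibility relations; consequently it permutes the rows of $A_\mathcal{O}$ compatibly with the coordinate relabeling it induces, sending solutions to solutions. A secondary point to confirm is that, with the origami fixed on both sides, no identification can come from a $\tau$ outside the centralizer, which is immediate from the defining conditions $\mu=\tau^*\mu$ and $\nu=\tau^*\nu$. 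Once these two verifications are in place, the corollary follows formally from the two bijections established above.
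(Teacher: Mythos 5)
Your proposal is correct and follows essentially the same route as the paper: the paper's own justification is exactly the discussion preceding the corollary, namely the logarithmic linearization of the compatibility condition $K_\mathcal{O}(\gamma,\mathbf{M})=1$ over a finite generating system of $\pi_1(\mathcal{O}^*)$ to obtain $\mathrm{Ker}\,A_\mathcal{O}$, followed by the observation that isomorphisms fixing the origami correspond to cell permutations in the centralizer $C_\mathcal{O}=\mathrm{Cent}_{\bar{S}_d}(\mu,\nu)$ acting by coordinate permutation, combined with Theorem~\ref{determined}. Your added verifications (that the $C_\mathcal{O}$-action preserves $\mathrm{Ker}\,A_\mathcal{O}$ and that no identification arises outside the centralizer) are details the paper leaves implicit, but they do not change the argument.
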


The group $C_\mathcal{O}'=\mathrm{Cent}_{\mathrm{Sym}(\bar{I}_d)}( \mu,\nu )$ is the automorphism group of the (possibly disconnected) dessin $(\mu,\nu)$ of degree $2d$. 
The graph of $(\mu,\nu)$ is the disjoint union of cycle graphs each components of which corresponds to the $\phi$-cylinders in the direction $[\frac{3}{2}\pi]\in \mathbb{R}/\pi\mathbb{Z}$. 
The group $\mathrm{Aut}(\mu,\nu)$ is described by the form
\begin{equation}
C_\mathcal{O}'\cong\left(\prod_{i=1}^{k}\{\pm1 \}\rtimes \mathbb{Z}/l_i \mathbb{Z}\right)\times \left(\prod_{l=1}^{d}S_{n(l)}\right),
\end{equation}
where the permutation $\mu\iota\nu\in\mathrm{Sym}(\bar{I}_d)$ has $k$ cycles of lengths $l_1,\ldots, l_k$ and $n(l)=\#\{i\mid l_i=l\}$. 
We have $C_\mathcal{O}=C_\mathcal{O}'\cap \bar{S}_d$.

\medskip\medskip
\section{Veech group of flat surface with two finite Jenkin-Strebel directions}
For a flat surface with two finite Jenkins-Strebel directions $\theta_1,\theta_2\in J(R,\phi)$, let $P(R,\phi,(\theta_1,\theta_2))$ denote the origamis with compatible moduli lists given by the decomposition of $(R,\phi)$ in $(\theta_1,\theta_2)$. 
\begin{corollary}[to Theorem \ref{determined}]\label{VG_determined}
Let $(R,\phi)$ be a flat surface with two finite Jenkins-Strebel directions $\theta_1,\theta_2\in J(R,\phi)$. 
A matrix $[A]\in PSL(2,\mathbb{R})$ belongs to $\Gamma (R,\phi)$ if and only if the following holds. 
\begin{enumerate}
\item $A\theta_1,A\theta_2$ belong to $ J(R,\phi)$. 
\item Let $P(R,\phi,(\theta_1,\theta_2))=(\mathcal{O},\mathbf{M})$ and $P(R,\phi,A(\theta_1,\theta_2))=(\mathcal{O}_A,\mathbf{M}_A)$. Then, $(\mathcal{O},\mathbf{M})$ is equivalent to $[A^{-1}]\cdot(\mathcal{O},\mathbf{M}):=(\mathcal{O}_A,\rho_{A,\theta_1,\theta_2}^{-1}\cdot\mathbf{M}_A)$. 
\end{enumerate}
\end{corollary}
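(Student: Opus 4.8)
The plan is to use Theorem \ref{determined} as the bridge between the analytic datum (an affine self-homeomorphism with derivative $[A]$) and the combinatorial datum (an origami with a compatible moduli list), proving the two implications separately. The guiding principle is that $[A]\in\Gamma(R,\phi)$ exactly when the affine deformation $A\cdot(R,\phi)$ — the flat surface obtained by post-composing the $\phi$-charts with $T_A$ — is isomorphic to $(R,\phi)$: indeed, the deformation map $\mathrm{id}\colon(R,\phi)\to A\cdot(R,\phi)$ has derivative $[A]$, so an isomorphism $g\colon A\cdot(R,\phi)\to(R,\phi)$ (which is trivial in $PSL(2,\mathbb{R})$) yields $f:=g\circ\mathrm{id}\in\mathrm{Aff}^+(R,\phi)$ with $D_f=[A]$, and conversely.

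For the forward implication I would take $f\in\mathrm{Aff}^+(R,\phi)$ with $D_f=[A]$. Condition (a) is immediate from Lemma \ref{cyl}, since $\theta_1,\theta_2\in J(R,\phi)$ forces $A\theta_1,A\theta_2\in J(R,\phi)$. For condition (b), I would observe that $f$, being locally-affine, carries the $(\theta_1,\theta_2)$-parallelogram decomposition of $(R,\phi)$ to the $(A\theta_1,A\theta_2)$-decomposition while respecting cell adjacencies; hence the combinatorial origami is preserved, giving $\mathcal{O}\sim\mathcal{O}_A$, whereas Lemma \ref{paral} scales each parallelogram modulus by $\rho_{A,\theta_1,\theta_2}$, giving $\mathbf{M}_A\sim\rho_{A,\theta_1,\theta_2}\cdot\mathbf{M}$. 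Together these say $(\mathcal{O}_A,\rho_{A,\theta_1,\theta_2}^{-1}\mathbf{M}_A)\sim(\mathcal{O},\mathbf{M})$, which is precisely (b).

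For the reverse implication I would assume (a) and (b) and build $f$. Applying $T_A$ to $(R,\phi)$ turns the directions $\theta_1,\theta_2$ into $A\theta_1,A\theta_2$, and the plane computation preceding Lemma \ref{paral} shows the $(A\theta_1,A\theta_2)$-decomposition of $A\cdot(R,\phi)$ retains the combinatorial origami $\mathcal{O}$ with moduli scaled by $\rho_{A,\theta_1,\theta_2}$; that is, $P(A\cdot(R,\phi),(A\theta_1,A\theta_2))=(\mathcal{O},\rho_{A,\theta_1,\theta_2}\mathbf{M})$. Condition (a) guarantees $P(R,\phi,(A\theta_1,A\theta_2))=(\mathcal{O}_A,\mathbf{M}_A)$ is defined, and (b) rearranges — using that scaling every modulus by the common factor $\rho_{A,\theta_1,\theta_2}$ leaves the equivalence relation of Definition \ref{compatible}(b) intact, since it only permutes moduli and fixes $\mu,\nu$ — to $(\mathcal{O}_A,\mathbf{M}_A)\sim(\mathcal{O},\rho_{A,\theta_1,\theta_2}\mathbf{M})$. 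Thus $(R,\phi)$ and $A\cdot(R,\phi)$, both carrying $A\theta_1,A\theta_2$ as finite Jenkins-Strebel directions, have equivalent origami-with-moduli data, so by Theorem \ref{determined} there is an isomorphism $g\colon A\cdot(R,\phi)\to(R,\phi)$, and $f:=g\circ\mathrm{id}$ realizes $[A]\in\Gamma(R,\phi)$.

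The main obstacle I anticipate is in the reverse direction: the passage from the combinatorial equivalence of $P$-data to an honest flat-surface isomorphism that respects the prescribed directions. This forces me to apply Theorem \ref{determined} with the common pair $(A\theta_1,A\theta_2)$ for both surfaces and to read its ``up to equivalence'' as an isomorphism matching the parallelogram markings, so that composing with the affine deformation produces a map with derivative exactly $[A]$ rather than a mere biholomorphism. Keeping careful track of the factor $\rho_{A,\theta_1,\theta_2}$ — so that it cancels precisely as encoded in the definition of the $PSL(2,\mathbb{R})$-action in (b) — is the bookkeeping that makes this step go through.
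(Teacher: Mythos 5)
Your argument is correct and follows essentially the same route as the paper: the forward direction via Lemma \ref{cyl}, the adjacency-preserving image of the parallelogram decomposition, and the modulus scaling by $\rho_{A,\theta_1,\theta_2}$ from Lemma \ref{paral}; and the converse via the uniqueness statement of Theorem \ref{determined}. Your reverse implication merely spells out in detail (through the affine deformation $A\cdot(R,\phi)$ and the cancellation of the common factor $\rho_{A,\theta_1,\theta_2}$) what the paper compresses into ``directly follows from Theorem \ref{determined}.''
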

\begin{proof}
If $f\in \mathrm{Aff}^+(R,\phi)$ of derivative $[A]$ exists, it maps the $(\theta_1,\theta_2)$-parallelograms to the $A(\theta_1,\theta_2)$-parallelograms with their adjacency preserved, and $A\theta_1,A\theta_2$ belong to $ J(R,\phi)$.  
The change of moduli of these parallelograms are described by the multiple of the constant $\rho_{A,\theta_1,\theta_2}$ in Lemma \ref{paral}. 
Thus the mapping $f$ pulls back the decomposition $P(R,\phi,A(\theta_1,\theta_2))$ to the decomposition $P(R,\phi,(\theta_1,\theta_2))$ as equivalent to $[A^{-1}]\cdot(\mathcal{O},\mathbf{M})$ by Theorem \ref{determined}. 
The converse directly follows from Theorem \ref{determined}. 
\end{proof}

\begin{definition}
  Let $(R,\phi)$ and $(S,\psi)$ be flat surfaces with two finite Jenkins-Strebel directions. 
  We say that a finite covering $f:(S,\psi)\rightarrow (R,\phi)$ is \textit{unbranched} if it is locally-affine and $\mathrm{Crit}(f)\subset f^{-1}(\mathrm{Sing}(\phi))\subset \mathrm{Sing}(\psi)$ holds.  
\end{definition}
\begin{rem}
For a locally-affine covering $f:(S,\psi)\rightarrow (R,\phi)$, $\psi=f_*\phi$ holds and local behavior of $\phi,\psi$ naturally correspond under $f$. 
The condition $\mathrm{Crit}(f)\subset f^{-1}(\mathrm{Sing}(\phi))$ implies that $f$ branches at most over the singularities of $(R,\phi)$. 
The condition $f^{-1}(\mathrm{Sing}(\phi))\subset \mathrm{Sing}(\psi)$ implies that no singularity on $(R,\phi)$ is canceled when pulled back. 
Note that the canonical double covering of non-abelian flat surface is not an unbranched covering. 
\end{rem}

For flat surfaces in covering relation, the commensurability of the Veech groups is known \cite{GJ}. 
More strongly, the following holds in our situation. 
\begin{lemma}\label{lem_VG_cov}
  Let $f:(S,\psi)\rightarrow (R,\phi)$ be an unbranched covering of flat surfaces with two finite Jenkins-Strebel directions. 
  Then $\Gamma(S,\psi)$ is a finite index subgroup of $\Gamma(R,\phi)$. 
\end{lemma}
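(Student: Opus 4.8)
The plan is to reduce everything to the combinatorial criterion of Corollary \ref{VG_determined} together with the fact that $f$ restricts to a genuine finite covering of punctured surfaces. First I would normalize the derivative $D_f=[B]$ to the identity by replacing $(R,\phi)$ with its image under the $SL(2,\mathbb{R})$-deformation $B^{-1}$; since such a deformation conjugates both Veech groups by $B$, this changes neither the statement nor the finite-index property, so I may assume $f$ is a half-translation cover. Then $\psi=f_*\phi$, and the unbranched hypothesis $\mathrm{Crit}(f)\subset f^{-1}(\mathrm{Sing}(\phi))\subset\mathrm{Sing}(\psi)$ forces $\mathrm{Sing}(\psi)=f^{-1}(\mathrm{Sing}(\phi))$ and makes the restriction $f:S^*\to R^*$ an honest finite topological covering of degree $n=\deg f$. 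In particular every closed $\phi$-geodesic pulls back to finitely many closed $\psi$-geodesics and conversely pushes forward, so $J(S,\psi)=J(R,\phi)$; moreover, in every direction the $(\theta_1,\theta_2)$-parallelograms of $S$ map isometrically onto those of $R$, exhibiting the origami-with-moduli decomposition of $S$ as a covering of that of $R$. I record the subgroup $H:=f_*\pi_1(S^*)\le\pi:=\pi_1(R^*)$, of index $n$.

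For the finite-index assertion I would argue by lifting. Each $h\in\mathrm{Aff}^+(R,\phi)$ preserves $\mathrm{Sing}(\phi)$ and hence restricts to a homeomorphism of $R^*$, inducing a well-defined action of $\Gamma(R,\phi)$ on the finite set of conjugacy classes of index-$n$ subgroups of $\pi$. If $[A]\in\Gamma(R,\phi)$ fixes the class $[H]$, then by covering-space theory a representative $h$ lifts to a homeomorphism of $S^*$; this lift is again locally-affine with the same derivative $[A]$ because $f$ is a half-translation cover, so $[A]\in\Gamma(S,\psi)$. Thus $\mathrm{Stab}_{\Gamma(R,\phi)}([H])\le\Gamma(S,\psi)$, and since $\Gamma(R,\phi)$ permutes finitely many conjugacy classes this stabilizer has finite index in $\Gamma(R,\phi)$. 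Consequently, once the inclusion $\Gamma(S,\psi)\le\Gamma(R,\phi)$ is established, the chain $\mathrm{Stab}_{\Gamma(R,\phi)}([H])\le\Gamma(S,\psi)\le\Gamma(R,\phi)$ immediately yields $[\Gamma(R,\phi):\Gamma(S,\psi)]<\infty$.

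It therefore remains to prove $\Gamma(S,\psi)\le\Gamma(R,\phi)$, which is the heart of the matter. Given $[A]\in\Gamma(S,\psi)$, condition (a) of Corollary \ref{VG_determined} for $(R,\phi)$ is immediate from $J(S,\psi)=J(R,\phi)$. For condition (b), let $g\in\mathrm{Aff}^+(S,\psi)$ have derivative $[A]$; Corollary \ref{VG_determined} applied to $S$ says that $g$ realizes an equivalence between the origami-with-moduli decompositions of $S$ in the directions $(\theta_1,\theta_2)$ and, after the scaling $\rho_{A,\theta_1,\theta_2}$ of Lemma \ref{paral}, $A(\theta_1,\theta_2)$. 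The key point is that the topological covering $f:S^*\to R^*$, and hence the subgroup $H\le\pi$, is the same in both direction-markings, since only the choice of horizontal and vertical directions has changed, not the space $R^*$ nor the cover. I would use this to push the $S$-equivalence down to an equivalence of the corresponding decompositions of $R$: since $g_*$ is an automorphism of $\pi_1(S^*)$ one has $(f\circ g)_*\pi_1(S^*)=H$, so $f$ and $f\circ g$ are isomorphic coverings of $R^*$, and the resulting identification descends the fiber-quotient $\mathcal{O}_S\mapsto\mathcal{O}$ compatibly with $g$. By Theorem \ref{determined} this descended equivalence is exactly condition (b) of Corollary \ref{VG_determined} for $(R,\phi)$, producing $h\in\mathrm{Aff}^+(R,\phi)$ with derivative $[A]$ and hence $[A]\in\Gamma(R,\phi)$.

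The main obstacle is precisely this descent step: for a non-normal covering $f$ an affine symmetry $g$ of $S$ need not map $f$-fibers to $f$-fibers, so $g$ itself may fail to descend to $R$. The plan is to circumvent this by working with the combinatorial equivalence classes of origamis-with-moduli rather than with $g$ directly, using the coincidence $(f\circ g)_*\pi_1(S^*)=f_*\pi_1(S^*)=H$ to see that the two decompositions of $R$ are quotients of isomorphic covers by the same monodromy data, and invoking the unambiguous reconstruction of a flat surface from an origami-with-moduli in Theorem \ref{determined}. Verifying that this reconstruction is compatible with the fiber-quotient, that is, that the descended datum is again a genuine origami-with-moduli equivalence in the sense of Definition \ref{compatible}, is where the argument must be carried out with care.
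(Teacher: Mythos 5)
Your overall strategy matches the paper's: normalize so that $f$ is a half-translation cover, use the unbranched hypothesis to get $\mathrm{Sing}(\psi)=f^{-1}(\mathrm{Sing}(\phi))$ and hence an honest covering $S^*\to R^*$, identify $J(S,\psi)$ with $J(R,\phi)$, project parallelograms isometrically, and reduce to the criterion of Corollary \ref{VG_determined}. Your argument for the \emph{finite-index} half is a genuinely different and arguably cleaner route: the paper argues that the reglued origami in the new directions ``has finitely many possibilities,'' whereas you let $\mathrm{Aff}^+(R,\phi)$ act on the finite set of conjugacy classes of index-$n$ subgroups of $\pi_1(R^*)$ and lift a stabilizing affine map to $S^*$. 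One small correction there: this is an action of $\mathrm{Aff}^+(R,\phi)$, not of $\Gamma(R,\phi)$, since two affine maps with the same derivative differ by a translation automorphism which may move $[H]$; the conclusion survives by taking the stabilizer inside $\mathrm{Aff}^+(R,\phi)$ and passing to its image of derivatives.

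The genuine gap is in the containment $\Gamma(S,\psi)\le\Gamma(R,\phi)$, exactly where you flag it, and the mechanism you propose does not close it. From $(f\circ g)_*\pi_1(S^*)=H$ you get a covering isomorphism, i.e.\ a homeomorphism $\Phi:S^*\to S^*$ with $f\circ g\circ\Phi=f$; unwinding this only says that $g\circ\Phi$ is a deck transformation of $f$, so that $g$ agrees, up to $\mathrm{Deck}(f)$, with the inverse of another locally-affine self-map of $S$ of derivative $[A]^{-1}$ --- information you already had. It produces no affine self-map of $R$, and it does not show that $g$, nor the combinatorial equivalence $\tau\in\bar{S}_{dN}$ between $(\mathcal{O}_S,\mathbf{M}_S)$ and $[A^{-1}]\cdot(\mathcal{O}_S,\mathbf{M}_S)$, permutes the $f$-fibers of cells; that fiber-compatibility is precisely what is needed for condition (b) to descend to $(\mathcal{O}_R,\mathbf{M}_R)$, and for a non-normal cover it is not implied by the equality of subgroups. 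The paper's proof does this work by exhibiting $\mathcal{O}_S$ as $N$ copies of $\mathcal{O}_R$ glued according to the monodromy of $f$, together with the cell projection $\mathcal{O}_S\to\mathcal{O}_R$ respecting adjacency up to copies, and pushing the equivalence of decompositions forward along that projection. To complete your version you must supply this compatibility of $\tau$ with the fibration of cells over $\mathcal{O}_R$; the subgroup identity $H=(f\circ g)_*\pi_1(S^*)$ alone is not enough.
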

\begin{proof}
  Let $\theta_1,\theta_2\in J(R,\phi)$, $p\in R\setminus\mathrm{Sing}(\phi)$, and $\gamma$ be a closed $\phi$-geodesic in the direction $\theta_1$ through $p$. 
  Then any lift of $\gamma$ is a $\phi$-geodesic joining points in $f^{-1}(p)$ in the direction $\theta_1$.  
  Finite collection of such lifts form a closed $\psi$-geodesic and any closed $\psi$-geodesic is of this form. 
  Since $\psi=f_*\phi$ where no singularity on $(R,\phi)$ is canceled, any pullbacks of a $\phi$-cylinder are not laminated together to make a wider cylinder.
  Thus $f$ induces 1-1 correspondence between $J(R,\phi)$ and $J(S,\psi)$, and between $(\theta_1,\theta_2)$-parallelograms of $(R,\phi)$ and $(S,\psi)$.   

  Let $\mathcal{O}_R$ ($\mathcal{O}_S$, respectively) denote the origami determined by the decomposition $P(R,\phi,(\theta_1,\theta_2))$ ($P(S,\psi,(\theta_1,\theta_2))$, respectively). 
  We can see that $\mathcal{O}_S$ is obtained from finite copies of $\mathcal{O}_R$ by regluing along their edges according to the monodromy of $f$. 
  Furthermore $f$ induces a projection from $\mathcal{O}_S$ to $\mathcal{O}_R$ which respects adjacency of squares up to the copies. 
  So if $(S,\psi)$ satisfies the condition {\rm (b)} in Corollary \ref{VG_determined}, then the same holds for $(R,\phi)$.  
  Conversely, for $[A]\in \Gamma(R,\phi)$, the origami determined by $(S,\psi)$ with $A(\theta_1,\theta_2)$ is similarly constructed as $\mathcal{O}_S$ up to difference of monodromy. 
  As it has finitely many possibilities, it coincides with $\mathcal{O}_S$ up to finite representatives. 
  The same can be said for the decomposition of $(R,\phi)$ into parallelograms. 
\end{proof}

\begin{theorem}\label{VG_covering}
  Let $f:(S,\psi)\rightarrow (R,\phi)$ be an $N$-fold, unbranched covering of flat surfaces with $(\theta_1,\theta_2)\in J(R,\phi)$. 
  Fix a base point $p\in R^*$ and a finite generating system $\mathcal{F}$ of $\pi_1(R^*,\cdot)$. 
  Let the Veech group $\Gamma(R,\mu)<PSL(2,\mathbb{R})$ act on $\mathcal{M}=({S}_N)^{\mathcal{F}}$ in  the way that a matrix $[A]\in\Gamma(R,\mu)$ transforms the monodromy of $f$ by taking the new decomposition in $A^{-1}(\theta_1,\theta_2)$. 
  Then, the Veech group $\Gamma(\hat{R},\psi)$ is the stabilizer of $\tau_f=m_f(\mathcal{F})\in\mathcal{M}$ under the equivalence defined by 
  \begin{enumerate}
    \item (relabeling of sheets of $f$) conjugation in $S_N$ and 
    \item (automorphism of $(R,\phi)$) conjugation in $C_{\mathcal{O}_R}$ where $\mathcal{O}_R$ is the origami determined by the decomposition $P(R,\phi,\theta_1,\theta_2)$. 
  \end{enumerate} 
\end{theorem}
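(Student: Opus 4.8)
The plan is to realize this statement as the exact analogue, over a general base $(R,\phi)$, of Schmith\"usen's description of an origami's Veech group as a stabilizer (Proposition \ref{diagram}), now carrying two sources of ambiguity that must be quotiented out. First I would record that, since $f$ is unbranched, its critical values lie in $\mathrm{Sing}(\phi)$, so over $R^*=R\setminus\mathrm{Sing}(\phi)$ the map restricts to an honest $N$-sheeted covering $f:S^*\to R^*$, where $S^*=f^{-1}(R^*)$. Such a covering is classified up to isomorphism by its monodromy $m_f:\pi_1(R^*,p)\to S_N$, taken modulo conjugation in $S_N$; this conjugation is exactly the relabeling of sheets of item (a), and it also absorbs the basepoint ambiguity appearing below. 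Evaluating $m_f$ on the fixed generating system records it as $\tau_f\in\mathcal{M}$. By Lemma \ref{lem_VG_cov} we already have $\Gamma(S,\psi)\le\Gamma(R,\phi)$, so the whole problem reduces to deciding, for a given $[A]\in\Gamma(R,\phi)$, whether the affine map of $(R,\phi)$ with derivative $[A]$ lifts through $f$ to an element of $\mathrm{Aff}^+(S,\psi)$.

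Next I would set up the lifting criterion. Fix $g\in\mathrm{Aff}^+(R,\phi)$ with $D(g)=[A]$; since $g$ preserves $\mathrm{Sing}(\phi)$ it restricts to a self-homeomorphism of $R^*$ and, after choosing an auxiliary path from $g(p)$ to $p$, induces an automorphism $g_*$ of $\pi_1(R^*,p)$ determined up to inner automorphism (and the inner ambiguity only conjugates the twisted monodromy within $S_N$, hence is harmless). By the standard lifting theorem for coverings, $g$ lifts to a self-homeomorphism of $S^*$ if and only if the twisted monodromy $m_f\circ g_*$ is conjugate in $S_N$ to $m_f$. Because $f$ is a local isometry for the flat metrics ($\psi=f_*\phi$) and $g$ is affine, any lift is automatically affine with derivative $[A]$ and extends across the finite set $S\setminus S^*$ by removable singularities, producing $h\in\mathrm{Aff}^+(S,\psi)$ with $D(h)=[A]$. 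I would then identify $m_f\circ g_*$ with the action in the statement: since $g$ carries the $(\theta_1,\theta_2)$-decomposition to the $A(\theta_1,\theta_2)$-decomposition, re-reading $m_f$ through the pulled-back $A^{-1}(\theta_1,\theta_2)$-decomposition on the generators $\mathcal{F}$ is precisely the computation of $[A]\cdot\tau_f$.

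The remaining and most delicate step is to account for the non-uniqueness of $g$. For a fixed $[A]$, the map $g$ is determined only up to $\ker\big(D:\mathrm{Aff}^+(R,\phi)\to\Gamma(R,\phi)\big)=\mathrm{Aut}(R,\phi)$; any such automorphism has trivial derivative, hence fixes $\theta_1,\theta_2$ and is one of the cell permutations constituting $C_{\mathcal{O}_R}=\mathrm{Cent}_{\bar S_d}(\mu,\nu)$ of Section \ref{sec3-3}. Replacing $g$ by $g\circ c$ with $c\in\mathrm{Aut}(R,\phi)$ changes $g_*$ to $g_*\circ c_*$ and hence transforms $[A]\cdot\tau_f$ by the element of $C_{\mathcal{O}_R}$ corresponding to $c$, so the collection of values of $[A]\cdot\tau_f$ obtained from all admissible $g$ is a single $C_{\mathcal{O}_R}$-orbit (item (b)). Consequently ``the affine map realizing $[A]$ lifts for some admissible $g$'' is equivalent to ``$[A]\cdot\tau_f$ agrees with $\tau_f$ after conjugation in $S_N$ and in $C_{\mathcal{O}_R}$,'' which is membership in the stabilizer of $\tau_f$. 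Running this in both directions --- a lift produces a stabilizing $[A]$, and conversely a stabilizing $[A]$ lets one choose the relabeling and the automorphism that make $m_f\circ g_*=m_f$, hence a genuine lift --- yields $\Gamma(S,\psi)=\mathrm{Stab}(\tau_f)$.

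I expect the main obstacle to be the bookkeeping in this last step rather than any single deep fact. One must verify that $\ker D$ is exactly $C_{\mathcal{O}_R}$ (that there are no extra automorphisms of $(R,\phi)$ beyond the cell permutations), that the induced action of $C_{\mathcal{O}_R}$ on $\pi_1(R^*,p)$ realizes conjugation on $\mathcal{M}$ in the sense of item (b), and that the two equivalences --- sheet relabeling in $S_N$ and base automorphisms in $C_{\mathcal{O}_R}$ --- combine into a genuine product action, so that ``stabilizer'' is taken in the correct quotient and the $\Gamma(R,\phi)$-action on $\mathcal{M}$ descends there well-definedly. The covering-theoretic lifting criterion and the extension across singularities are standard; the real care lies in matching the geometric recipe of the $A^{-1}(\theta_1,\theta_2)$-decomposition with the algebraic twist $m_f\circ g_*$ on the chosen generators.
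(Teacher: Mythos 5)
Your route is correct in outline but genuinely different from the paper's. The paper never invokes the covering-space lifting criterion: it characterizes $\Gamma(S,\psi)$ intrinsically on $S$ via Corollary \ref{VG_determined}, observing (from the proof of Lemma \ref{lem_VG_cov}) that $P(S,\psi,A^{-1}(\theta_1,\theta_2))$ is assembled by patching $N$ copies of $P(R,\phi,(\theta_1,\theta_2))$ according to the twisted monodromy data $[A]\cdot\tau_f$, and then reads off that an equivalence of decompositions in the sense of Definition \ref{compatible} exists exactly when $\tau_f$ and $[A]\cdot\tau_f$ agree up to relabeling of copies ($S_N$-conjugation) and an automorphism of the base block ($C_{\mathcal{O}_R}$-conjugation). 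Your argument instead descends to $R$, applies the classical criterion ``$g$ lifts iff $m_f\circ g_*$ is $S_N$-conjugate to $m_f$,'' and accounts for the ambiguity of $g$ by $\ker D\cong C_{\mathcal{O}_R}$. What your approach buys is transparency: the two conjugations in the statement appear for clearly separated reasons (classification of coverings, non-uniqueness of the realizing affine map), and the identification of $m_f\circ g_*$ with $[A]\cdot\tau_f$ makes the geometric recipe ``re-read the monodromy in the $A^{-1}(\theta_1,\theta_2)$-decomposition'' precise. What the paper's approach buys is that it stays entirely within the combinatorial machinery of Theorem \ref{determined}, so no separate argument about extending lifts across singularities is needed.

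One step of yours needs more than what you cite. From Lemma \ref{lem_VG_cov} you take only the containment $\Gamma(S,\psi)\le\Gamma(R,\phi)$ of derivative groups, and then assert that the problem ``reduces to deciding whether the affine map of $(R,\phi)$ with derivative $[A]$ lifts.'' That reduction requires the stronger fact that every $h\in\mathrm{Aff}^+(S,\psi)$ \emph{descends} through $f$ to an affine self-map of $R$ (equivalently, permutes the fibers of $f$); mere containment of the images under $D$ does not rule out an $[A]$ realized on $S$ by a map that does not cover any affine map of $R$, in which case $[A]\in\Gamma(S,\psi)$ without $[A]$ stabilizing $\tau_f$. The needed descent is exactly what the proof of Lemma \ref{lem_VG_cov} establishes at the combinatorial level (an equivalence of the patched decomposition of $S$ induces one of the block decomposition of $R$, since the block structure is recovered from the flat geometry of the unbranched covering), so the gap is fillable from the paper's own material --- but you should invoke that argument explicitly rather than the group containment.
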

\begin{proof}
  As we have seen in the proof of Lemma \ref{lem_VG_cov}, it follows that any lift of $\gamma\in\pi_1(R^*,\cdot)$ connects two copies of  $P(R,\phi,A^{-1}(\theta_1,\theta_2))=P(R,\phi,(\theta_1,\theta_2))$ in $P(S,\psi,A^{-1}(\theta_1,\theta_2))$ for each $[A]\in \Gamma(R,\phi)$. 
  One obatins the decomposition $P(S,\psi,A^{-1}(\theta_1,\theta_2))$ by patching copies of $P(R,\phi,(\theta_1,\theta_2))$ according to the new monodromy data  $[A]\cdot\tau_f$. 
  It follows from Corollary \ref{VG_determined} that the stabilizer represents the Veech group. 
\end{proof}

\begin{exm}
 The non-abelian origami $\mathcal{D}=((1,2,3,4,5,6)$, $(1,2,5,6,3,4)$, $(-,+,-,+,-,+))$ in Figure \ref{D-monodromy} is the unique nontrivial origami with the maximal Veech group $PSL(2,\mathbb{Z})$ of the smallest degree $6$. 
\if0
    \begin{figure}[htbp]
    \begin{center}
      \includegraphics[width=80mm]{Characta.png}
      \caption{The  origami $\mathcal{D}:(x,y,\varepsilon)=((1,2,3,4,5,6)$, $(1,2,5,6,3,4)$, $(-,+,-,+,-,+))$. }
    \label{Fig.C}
    \end{center}
    \end{figure}
\fi
  Let $f:\mathcal{O}\rightarrow \mathcal{D}$ be an $N$-fold, unbranched covering of the origami $\mathcal{D}$. 
  Fix a finite generating system of $\pi_1(\mathcal{D}^*,\cdot)$ and label the cells of $\mathcal{D}$ as shown in Figure \ref{D-monodromy}. 
  Then the monodromy $\tau_f$ of $f$ can be seen as an element of $\mathcal{M}=({S}_N)^7$ such that the monodromies arround the three poles of $\mathcal{D}$ are not of order two. 
  Automorphisms of $\mathcal{D}$ acts as $C_{\mathcal{D}}=\langle\delta:=(1\ 3\ 5)(2\ 4\ 6)\rangle \cong C_3$, and the action of the Veech group $\Gamma(\mathcal{D})=PSL(2,\mathbb{Z})=\langle[T],[S]\rangle$ on $\mathcal{M}$ is given by
   \begin{figure}[htbp]
   \begin{center}
     \includegraphics[width=90mm]{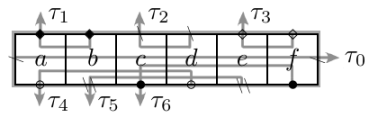}
     \caption{The  origami $\mathcal{D}:(x,y,\varepsilon)=((1,2,3,4,5,6)$, $(1,2,5,6,3,4)$, $(-,+,-,+,-,+))$
: edges with the same marks are glued. 
Arrows are fixed generators of $\pi_1(\mathcal{D}^*,\cdot)$ and $\tau_1,...,\tau_6$ denote monodromies along them. 
}
   \label{D-monodromy}
   \end{center}
   \end{figure}
   \begin{align}
     [T]\cdot\tau&=(\tau_0,\tau_1,\tau_2,\tau_3,\tau_5,\tau_6,{\tau_4^{-1}\tau_0^{-1}}),\label{TD}\\
      [S]\cdot\tau&=(\tau_4\tau_2^{-1}\tau_6\tau_3^{-1}\tau_5^{-1}\tau_1^{-1},
\tau_1\tau_5\tau_3\tau_6^{-1}\tau_2\tau_6\tau_3^{-1}\tau_5^{-1}\tau_1^{-1},\nonumber\\
&\tau_1\tau_5\tau_3\tau_3^{-1}\tau_5^{-1}\tau_1^{-1},
\tau_1, 
\tau_1\tau_5\tau_3\tau_6^{-1}\tau_2\tau_5^{-1}\tau_1^{-1},
\tau_1\tau_5\tau_3\tau_6^{-1}\tau_1^{-1},
\tau_1\tau_5\tau_3\tau_0),
\label{SD}
   \end{align}
   for each $\tau=(\tau_0,\tau_1,\tau_2,\tau_3,\tau_4,\tau_5,\tau_6)\in\mathcal{M}$. 
 Theorem \ref{VG_covering} states that the Veech group $\Gamma(\mathcal{O})$ is the stabilizer of the class of $\tau_f$ under this action.  It can be calculated using the Reidemeister-Schreier method like abelian origamis \cite{S1}. 

   
We obtain the formulae (\ref{TD}) and (\ref{SD}) in the way stated in Figure \ref{TD-monodromy} and Figure \ref{SD-monodromy}. 
  By Theorem \ref{determined}, the covering $f:\mathcal{O}\rightarrow \mathcal{D}$ is uniquely determined by a monodromy $\tau_f\in ({S}_N)^7$ with two directions $(0,\frac{\pi}{2})$. 
   For matrices $[A]=[T],[S]\in \Gamma(\mathcal{D})=PSL(2,\mathbb{Z})$, the decomposition $P(\mathcal{O},A(0,\frac{\pi}{2}))$ is tiled by $[A^{-1}]\cdot\mathcal{D}=P(\mathcal{D},A(0,\frac{\pi}{2}))\cong \mathcal{D}$ as shown in    Figure \ref{TD-monodromy} and Figure \ref{SD-monodromy}. 
   It is also uniquely determined by a suitable monodromy $[A^{-1}]\cdot\tau_f\in ({S}_N)^7$ up to equivalence. 

\end{exm}
  \begin{figure}[htbp]
   \begin{center}
     \includegraphics[width=65mm]{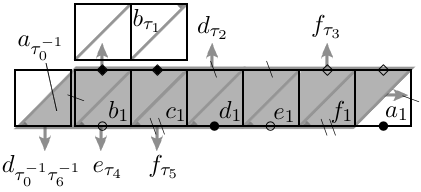}
     \caption{The decomposition $P(\mathcal{O},T(0,\frac{\pi}{2}))$: tiled by $[T^{-1}]\cdot\mathcal{D}\cong \mathcal{D}$ (shaded). 
Each cell of $[T^{-1}]\cdot\mathcal{D}$ is labeled 
according to the sheet of $\mathcal{D}$ to which the left-lower corner belongs. 
     We obtain the formula $[T^{-1}]\cdot \tau=(\tau_0,\tau_1,\tau_2,\tau_3,{\tau_0^{-1}\tau_6^{-1},\tau_4,\tau_5})$ and (\ref{TD}). }
   \label{TD-monodromy}
   \end{center}
   \end{figure}
   \begin{figure}[htbp]
   \begin{center}
     \includegraphics[width=90mm]{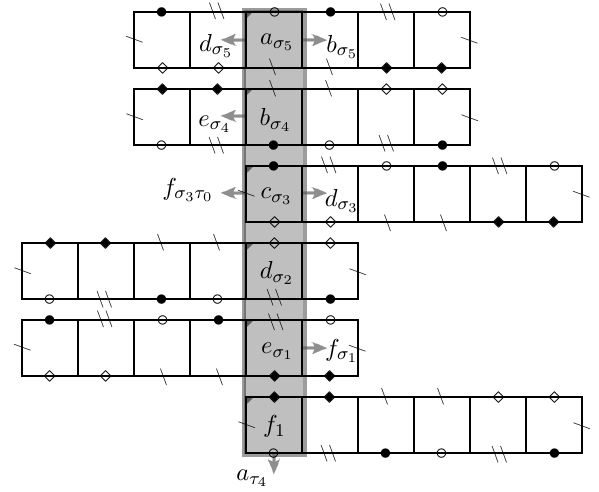}
     \caption{The decomposition $P(\mathcal{O},S(0,\frac{\pi}{2}))$: tiled by $[S^{-1}]\cdot\mathcal{D}\cong \mathcal{D}$ (shaded).  
Each cell of $[S^{-1}]\cdot\mathcal{D}$ is labeled 
according to the sheet of $\mathcal{D}$ to which the left-lower corner belongs. 
The base sheet is denoted by $s=(a_{\sigma_5},b_{\sigma_4},c_{\sigma_3},d_{\sigma_2},e_{\sigma_1},f_1)$ where $\sigma_1=\tau_1$, $\sigma_2=\sigma_1\tau_5$, $\sigma_3=\sigma_2\tau_3$, $\sigma_4=\sigma_3\tau_6^{-1}$, and     $\sigma_5=\sigma_4\tau_2$. 
Starting from the base sheet $s$, the $S$-deformation of $\tau_0$ reaches to the sheet including $a_{\tau_4}$ i.e.\ the sheet $\tau_4\sigma_5^{-1}s$. Thus the sheet transition is $[S^{-1}]\cdot \tau_0=\tau_4\sigma_5^{-1}$. 
 And so on, we obtain the formula $[S^{-1}]\cdot \tau=
(\tau_4\sigma_5^{-1},\sigma_5\sigma_4^{-1},\sigma_3\sigma_2^{-1},\sigma_1 ,\sigma_5\sigma_2^{-1},\sigma_4\sigma_1^{-1},\sigma_3\tau_0)$ and  (\ref{SD}).}
   \label{SD-monodromy}
   \end{center}
   \end{figure}

\newpage
\begin{acknowledgements}
I would like to thank Prof.\ Toshiyuki Sugawa for his helpful advices and comments. 
I am grateful to Prof.\ Hiroshige Shiga for his thoughtful guidance. 
I thank Prof. Rintaro Ohno for several suggestions. 
Some proposals given by Prof.\ Yoshihiko Shinomiya helped me to get an idea for this paper. 
\\
In Weihnachtsworkshop 2019 at Karlsruhe, I had a lot of significant discussions on my research. I would like to thank Prof.\ Frank Herrlich, Prof.\ Gabriela Weitze-Schmith\"usen, and Prof.\ Martin M\"oller for their expert advices and comments. 
I thank Sven Caspart for helpful discussions. 
\if0
I would thank Toshiyuki Sugawa and Hiroshige Shiga  for their helpful advices and comments. 
I thank Rintaro Ohno, Yoshihiko Shinomiya, and the reseachers I disucussed at Karlsruhe Institute of Technology. 
\fi
\\
\\
\end{acknowledgements}

%
%



\if0

\fi

\end{document}